\renewcommand{\emptyset}{\varnothing}
\newtheorem{lemma}[thm]{Lemma}
\newtheorem{cor}[thm]{Corollary}
\newtheorem{prop}[thm]{Proposition}
\begin{document}
\begin{frontmatter}

\title{Planar lattices do not recover from forest fires}
\runtitle{Planar lattices do not recover from forest fires}

\begin{aug}
\author[A]{\fnms{Demeter}~\snm{Kiss}\thanksref{T1}\ead[label=e1]{d.kiss@statslab.cam.ac.uk}},
\author[B]{\fnms{Ioan}~\snm{Manolescu}\corref{}\thanksref{T2}\ead[label=e2]{ioan.manolescu@unige.ch}}
\and
\author[C]{\fnms{Vladas}~\snm{Sidoravicius}\ead[label=e3]{vladas@impa.br}}
\runauthor{D. Kiss, I. Manolescu and V. Sidoravicius}
\affiliation{University of Cambridge and AIMR Tohoku University,
Universit\'e de Gen\`eve and~IMPA}
\address[A]{D. Kiss\\
Statistical Laboratory\\
Centre for Mathematical Sciences\\
University of Cambridge\\
Wilberforce Road\\
Cambridge, CB3 0WB\\
United Kingdom\\
\printead{e1}} 
\address[B]{I. Manolescu\\
D\'epartement de Math\'ematiques\\
Universit\'e de Gen\`eve\\
2--4 rue du Li\`evre, Gen\`eve\\
Switzerland\\
\printead{e2}}
\address[C]{V. Sidoravicius\\
IMPA\\
Estrada Dona Castorina 110\\
22460-320, Rio de Janeiro\\
Brazil\\
\printead{e3}}
\end{aug}
\thankstext{T1}{Supported in part by NWO Grant  613.000.804.}
\thankstext{T2}{Supported by the ERC Grant
AG CONFRA as well as the FNS.}

\received{\smonth{1} \syear{2014}}
\revised{\smonth{7} \syear{2014}}

%
\begin{abstract}
Self-destructive percolation with parameters $p,\delta$ is obtained by
taking a~site percolation configuration with parameter $p$,
closing all sites belonging to infinite clusters,
then opening every closed site with probability $\delta$, independently
of the rest.
Call $\theta(p,\delta)$ the probability that the origin is in an infinite
cluster in the configuration thus obtained.

For two-dimensional lattices, we show the existence of $\delta> 0$ such
that, for any $p > p_c$, $\theta(p,\delta) = 0$.
This proves the conjecture of van den Berg and Brouwer [\textit{Random Structures Algorithms} \textbf{24} (2004) 480--501],
who introduced the model.
Our results combined with those of van den Berg and Brouwer
[\textit{Random Structures Algorithms} \textbf{24} (2004) 480--501]
imply the
nonexistence of the infinite parameter forest-fire model.
The methods herein apply to site and bond percolation on any
two-dimensional planar lattice with sufficient symmetry.
\end{abstract}

%
\begin{keyword}[class=AMS]
\kwd{60K35}
\kwd{82B43}
\end{keyword}
\begin{keyword}
\kwd{Self-destructive percolation}
\kwd{planar percolation}
\kwd{critical percolation}
\kwd{near-critical percolation}
\kwd{forest fires}
\end{keyword}
\end{frontmatter}

\section{Introduction}

Self-destructive percolation was introduced in 2004 by van den Berg and
Brouwer \cite{VdBB}.
It may be formulated for both bond and site percolation; we choose to
consider the latter.
Fix some infinite connected graph $G$.

For $\delta, p \geq0$ consider a regular site percolation configuration
with intensity $p$.
Close all sites contained in the, possibly  many, infinite clusters; we
say infinite clusters are ``burned.''
Finally, open every closed site in the above configuration with
probability $\delta$, independently of all previous choices.
Call $\mathbb{P}_{p,\delta}$ the measure governing the configuration
thus obtained
and $\theta(p,\delta)$ the $\mathbb{P}_{p,\delta}$-probability that
a~given site
(called the origin) is in an infinite cluster.
Formal and extended definitions will be given in Section~\ref{sec:model}.

Let $\delta_c(p) = \inf\{\delta\dvtx\theta(p,\delta) > 0\}$, and
let $p_c =
p_c(G)$ denote the critical point for regular site percolation.
For $p < p_c$ and $\delta\geq0$, $\mathbb{P}_{p,\delta}$
is just a regular percolation measure with parameter $p + (1-p)\delta$.
In particular $\delta_c(p) = \frac{p_c - p}{1 - p}$ when $p < p_c$.
Consequently, self-destructive percolation is only interesting for $p
\geq p_c$.
In their original paper \cite{VdBB}, van den Berg and Brouwer
conjectured that,
for planar lattices, $\delta_c$ is uniformly bounded away from $0$
when $p>p_c$.

The conjecture is somewhat surprising. Recall that on planar lattices
there is no infinite cluster at $p = p_c$.
Hence, when $p$ is only slightly larger than $p_c$, the infinite
percolation cluster is very thin,
and it may be expected that, after burning it, opening only few sites
suffices to obtain a new infinite cluster.

Recently Ahlberg, Sidoravicius and Tykesson \cite{AST} proved that,
for nonamenable graphs $G$, the conclusion of the conjecture is false,
that is,  $\delta_c(p) \to0$ as \mbox{$p \to p_c$}.
The same has been shown by Ahlberg et al.\vspace*{1pt} \cite{ADKS} for
high-dimensional lattices
(more precisely for bond percolation on $\mathbb{Z}^d$ with $d$ large enough).

In two dimensions it has been proved in \cite{VdBB}, Proposition 3.1,
that $\delta_c(p) >0$ for any given $p > p_c$.
This was later strengthened by van den Berg and de Lima \cite{VdBL} to
the linear lower bound $\delta_c(p) \geq(p-p_c)/p$,
but a bound which is nonzero and uniform in $p$ could not be obtained.
In the present paper we prove the afore-mentioned conjecture.
For illustration we will consider site percolation on the
two-dimensional lattice $\mathbb{Z}^2$;
see Section~\ref{sec:model} for precise definitions.
Fix $G = \mathbb{Z}^2$ from now on.

%
\begin{thm}\label{thm:main}
There exists $\delta> 0$ such that, for all $p > p_c$, $\theta
(p,\delta) = 0$.
\end{thm}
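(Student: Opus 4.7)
I would structure the argument in two stages: first establish a finite-size crossing estimate at a carefully chosen scale, then upgrade to the absence of an infinite cluster by a renormalization argument. For the reduction, a standard block-renormalization argument reduces Theorem~\ref{thm:main} to the following statement: for $\de>0$ small enough (\emph{uniformly} in $p>p_c$), the probability of a $\ol\om^\de$-open horizontal crossing of a rectangle $[0,3N]\times[0,N]$ is below some fixed small threshold, at an appropriate scale $N=N(p)$. Since $\PP_{p,\de}$ is not a product measure, the renormalization needs the finite range of essential dependence in $\ol\om$ (which is at most the correlation length), combined with a sprinkling/FKG trick, to compare with a subcritical Bernoulli bond percolation on a coarser lattice.

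The central task is the crossing estimate. I would choose $N(p)$ to be a constant multiple of the correlation length $L(p)$ of percolation at parameter $p$, so that $\om$ is ``near-critical'' up to scale $N(p)$: in this regime the full suite of two-dimensional tools—RSW, quasi-multiplicativity, Kesten's near-critical scaling relations, and uniform control of arm exponents—is available with constants independent of $p>p_c$. The main idea is then a counting argument on possible crossing paths. Decompose any candidate crossing path $\pi$ of the rectangle according to the role of each of its sites: $x\in\pi$ is a \emph{native} site when $\ol\om(x)=1$, and a \emph{bridge} when $\ol\om(x)=0$ but $\si(x)=1$. Each bridge contributes a factor $\de$ from the independent $\si$-randomness. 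Crucially, each bridge also forces a local low-probability event in $\om$: either $x$ is $\om$-closed and separates two native sub-paths of $\pi$ (forcing a local dual/closed-arm configuration), or $x$ lies in a burned infinite cluster (forcing a long closed arm from $x$ out to $\infty$, which is costly at scales below $L(p)$).

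Summing the resulting bound $\de^k \cdot (\text{arm-cost per bridge})^k$ over the number $k$ of bridges on $\pi$—using the connective constant for the count of paths of given length and a BK-type inequality to combine disjoint arm events—yields a geometric series in $\de$ that can be made arbitrarily small by taking $\de$ small. The \textbf{main obstacle} is controlling the dependency structure between the local arm events at distinct bridges: their arms may overlap spatially, and the global ``not connected to $\infty$'' condition couples arbitrarily distant bridges. I would address this by first thinning to a well-separated subset of bridges along $\pi$ (say at pairwise distance at least some $\eps N$), then conditioning on an outer configuration and invoking quasi-multiplicativity to treat each bridge as if it lived in its own disjoint annulus of scale comparable to its separation. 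A subsidiary but crucial point is that all arm-exponent constants must be uniform in $p>p_c$, which forces one to work strictly at scales $\le L(p)$; this uniformity is precisely why $N(p)$ must be tied to the correlation length rather than to a fixed absolute scale.
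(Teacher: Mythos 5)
There are genuine gaps, and the two central ones are exactly the difficulties that Section~\ref{sec:cross} of the paper is built to overcome.

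First, the union bound over candidate crossing paths weighted by the connective constant cannot converge. The ``native'' sites of a path cost essentially nothing (they are open in $\ol\om^\de$ with probability at least $p$, and $p_c\mu>1$ for the connective constant $\mu$ of $\ZZ^2$), so summing over self-avoiding crossings of a box of side $N$ produces a factor growing at least like $(p_c\mu)^N$ that no per-bridge gain of the form $(\de\cdot\text{arm-cost})^k$ can absorb: the number of bridges $k$ is typically much smaller than the length of the path. The paper avoids summing over paths altogether: it fixes one canonical crossing (the left-most one with the minimal number of enhanced sites), records only its set $\XX$ of passage points, and sums over the possible values of $\XX$ in \eqref{eq:probsum}; the entropy is then controlled not by path counting but by counting point configurations with a prescribed hierarchy of mutual distances (Proposition~\ref{prop:X}, via a tree/Catalan bound). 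Second, your per-bridge probabilistic cost is too weak and your treatment of clustered bridges discards the essential information. A burned bridge carries an \emph{open} one-arm event (it lies in the infinite $\om$-cluster), and the one-arm exponent is smaller than $2$, so $\sum_k k\pi_1(k)$ diverges and the resulting series cannot be summed over the positions of the bridges --- the paper states explicitly that this is why the naive bound \eqref{eq:bad_bound} fails. What makes the argument work is the \emph{six-arm} structure around passage points (two open arms from the burned crossing cluster, the open crossing path, and the separating closed arms; Lemma~\ref{lemma:arms}), whose exponent exceeds $2$ by Corollary~\ref{cor:la}. Moreover, passage points are typically tightly bunched (one enhancement rarely suffices to traverse $\chi\cup\pd\chi$), so thinning to bridges at mutual distance $\geq\eps N$ throws away precisely the clusters whose cumulative cost must be extracted; the paper instead runs the six-arm estimate at \emph{every} scale of the merger-tree hierarchy of $X$ (Proposition~\ref{prop:good_bound}), and convergence of \eqref{eq:probsum} comes from matching $\prod_e \dd_e^{-(2+\la)}$ against the entropy $\prod_e \dd_e$ of Proposition~\ref{prop:X}.

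Two further points. Your target crossing statement is not correct as formulated: at scale $N\asymp L(p)$ the configuration $\ol\om^\de$ (indeed $\ol\om$ itself) crosses a $3N\times N$ rectangle with probability bounded below by RSW-type considerations, so one must first deterministically remove the crossing clusters (the paper's $\wt\om$, or $\check\om$ in Corollary~\ref{cor:circ}) and only then prove the crossing probability of the modified-and-enhanced configuration is small. Finally, your choice to work near-critically at scale $L(p)$ is a legitimate alternative framing (close in spirit to van den Berg--de Lima), but the paper sidesteps all near-critical machinery: the hard estimate (Theorem~\ref{thm:cross}) is proved purely at $p_c$, and the passage to $p>p_c$ uses only the monotone coupling of $\check\om^{\de}$ under $\PP_{p_c,\de}$ with $\check\om^{\de'}$ under $\PP_{p,\de'}$, followed by an annulus-circuit bound and the translation-invariance zero--one law rather than a block renormalization.
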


%
\begin{rem} \label{rem:gen_lattice}
Theorem~\ref{thm:main} also holds for site and bond self-destructive
percolation
on any planar graph which is invariant under a translation (by some $u
\in\mathbb{R}^2 \setminus\{0\}$),
a rotation [of an angle $\varphi\in(0, \pi)$]
and reflection with respect to some line.
These symmetry conditions are needed for the RSW result of
Proposition~\ref{prop:bxp}.
Indeed Proposition~\ref{prop:bxp} may be adapted to lattices with the
symmetry conditions above.

In particular, an analogue of Theorem~\ref{thm:main} also holds for
site percolation on the triangular lattice
and bond percolation on the square lattice.
%
\end{rem}

Let us discuss some implications of Theorem~\ref{thm:main}. %
Let $\delta_c$ be the limit of $\delta_c(p)$ as $p\searrow p_c$.
Theorem~\ref{thm:main} 
together with the results in \cite{VdBBV} shows that the function
$(p,\delta)\rightarrow\theta(p,\delta)$ is continuous on the set
$[0,1]^2\setminus\{p_c\}\times(0,\delta_c]$,
while it is discontinuous on $\{p_c\}\times(0,\delta_c]$.

Our result has important consequences for forest fires, a class of
model introduced in \cite{DS-ff}.
Intuitively, an infinite-parameter forest fire is a process indexed by
$t\geq0$ defined as follows.
At time $t=0$, all sites are closed. As $t$ increases, sites open
independently at rate $1$.
When an infinite cluster appears it is immediately burned (i.e., all
its sites are closed).
Then sites become open again at rate $1$, etcetera.

It is not clear whether such a model actually exists.
We show in Section~\ref{sec:proof_steps} that our results combined with
those in \cite{VdBB} imply that infinite-parameter forest fires cannot
be defined on two-dimensional lattices.

To avoid the problems of definition, one can investigate the
$N$-parameter forest fire models
with $N<\infty$. That is, we modify the dynamics above by burning
clusters as soon as
their ``size'' reaches $N$. Our results with those of \cite{vdBB-forest-fires} provide some insight
to the behavior of these processes. We find a behavior which is quite
different compared to that
of a mean field version of the forest fire model; cf. \cite{RT}. See
Section~\ref{sec:proof_steps} for a
more detailed discussion.

\subsection*{Organization of the paper}

In Section~\ref{sec:model} we introduce the formal definitions and
notation used throughout the paper.
Once the notation is in place,
in Section~\ref{sec:proof_steps} we state a result on certain
box-crossing probabilities (Theorem~\ref{thm:cross})
and show how Theorem~\ref{thm:main} can be deduced from it.
We also discuss in more detail its implications for forest fire models.
Theorem~\ref{thm:cross} is our main contribution.

Section~\ref{sec:arms} contains a review of the notion of arm events
essential, to the proofs of the next section.
In Section~\ref{sec:cross} we provide a delicate counting argument
which proves Theorem~\ref{thm:cross}.

\section{Definitions and notation}\label{sec:model}

\subsection{The model}

Let $\mathbb{Z}^2$ denote the square lattice with vertices $V(\mathbb
{Z}^2)$ (also
called \emph{sites}) and edges $E(\mathbb{Z}^2)$.
For sites $x,y \in V(\mathbb{Z}^2)$ we write $x \sim y$, alternatively $(x,y)
\in E(\mathbb{Z}^2)$, when $\Vert x-y\Vert_2 = 1$.
Set $\Omega= \{0,1\}^{V(\mathbb{Z}^2)}$.
We call an element $\omega\in\Omega$ a \emph{configuration} and
write $\{\omega(x)\dvtx x \in V(\mathbb{Z}^2)\}$ for its coordinates.
A site $x$ with $\omega(x)=1$ is called \emph{open} (or $\omega
$-open when
the configuration needs to be specified),
while one with $\omega(x) = 0$ is called \emph{closed}.

A path on $\mathbb{Z}^2$ is a sequence of sites $\gamma= (u_0, \ldots
, u_n)$ with
$u_i \sim u_{i+1}$ for $i = 0, \ldots, n-1$.
Moreover we ask all paths to be self-avoiding, that is, for the
vertices $u_0, \ldots, u_n$ to be pairwise distinct.
A path is called $\omega$-open (resp., $\omega$-closed) for a~configuration
$\omega$ if all its vertices are $\omega$-open (resp., $\omega$-closed).

For a configuration $\omega$ and $x, y \in V(\mathbb{Z}^2)$, we say
$x$ is \emph{connected} to $y$ in $\omega$, and write $x \stackrel{\omega}{\leftrightarrow} y$,
if there exists an $\omega$-open path with endpoints $x$ and $y$.
We write $x \stackrel{\omega}{\leftrightarrow} \infty$ and say that $x$ is
\emph{connected to infinity}
if there exists an infinite $\omega$-open path starting at $x$.
Finally we write $x \stackrel{\omega}{\nleftrightarrow} y$ and $x
\stackrel{\omega}{\nleftrightarrow} \infty$ for the
negations of the above events.
A \emph{cluster} is a connected component of the graph induced by the
open sites of $\mathbb{Z}^2$.

For $p \in[0,1]$, let $\mathbb{P}_p$ be the site percolation measure
on $\mathbb{Z}
^2$ with intensity\vspace*{1pt} $p$.
That is, $\mathbb{P}_p$ is the product measure on $\Omega$ with
$\mathbb{P}_p(\omega(x) =1)
= p$ for all $x \in V(\mathbb{Z}^2)$.
Finally let $p_c = \sup\{p \geq0 \dvtx \mathbb{P}_p(0 \stackrel{\omega}{\leftrightarrow}\infty) = 0\}$.
For $p > p_c$ it is well known that there exists $\mathbb{P}_p$-a.s. a unique
infinite cluster.
For this and further details on percolation we direct the reader to
\cite{Grimmett_Percolation}.

Let $p \in[0,1]$, and consider a configuration $\omega$ chosen according
to $\mathbb{P}_p$.
We define a modification of $\omega$, called $\overline\omega$, as follows.
For $x \in V(\mathbb{Z}^2)$,
\[
\overline\omega(x) = \cases{ 1, & \quad$\mbox{if } \omega(x) =1 \mbox{ and } x
\stackrel{\omega} {\nleftrightarrow} \infty$,
\cr
0, &\quad$\mbox{otherwise}$.}
\]
Let $\delta\geq0$ and $\sigma$ be a configuration chosen according
to $\mathbb{P}
_\delta$, independently of $\omega$.
The enhancement of $\overline\omega$ with intensity $\delta$ is
$\overline\omega^\sigma(x) =
\overline\omega(x) \vee\sigma(x)$.

Let $\mathbb{P}_{p, \delta}$ denote the probability measure governing
$\omega$, $\sigma
$ and thus $\overline\omega$ and $\overline\omega^\sigma$.
To avoid confusion, when working with $\mathbb{P}_{p,\delta}$, we
will usually
state to which configuration we refer.
When writing simply $\mathbb{P}_{p,\delta}(A)$ we mean $\mathbb
{P}_{p,\delta}(\overline\omega^\sigma
\in A)$.
Let
\[
\theta(p,\delta) = \mathbb{P}_{p, \delta} \bigl( 0 \stackrel{\overline{\omega}^\sigma}{\longleftrightarrow} \infty \bigr).
\]
Note that $\mathbb{P}_{p, \delta}$ is increasing in $\delta$, hence
so is $\theta$.

\subsection{Further notation} \label{ssec:further_not}


Let $\operatorname{dist}(\cdot,\cdot)$ denote the $L^\infty$
distance on $\mathbb{Z}^2$. That is,
\[
\operatorname{dist}(x,y) = \max \bigl(\vert x_1-y_1\vert,
\vert x_2-y_2\vert \bigr) \qquad\mbox{for }
x=(x_1,x_2),y=(y_1,y_2)\in
\mathbb{Z}^2.
\]
For $u \in V(\mathbb{Z}^2)$ and $n \geq0$,
denote by $\Lambda_n(u)$ the ball of radius $n$ around $u$ for the
$L^\infty$ distance.
Hence $\Lambda_n(u) = ([-n,n]^2 + u )\cap\mathbb{Z}^2$.
When $u$ is omitted, it is assumed equal to the origin.
We will usually identify regions of the plane with the set of vertices
they contain.


For $A \subset V(\mathbb{Z}^2)$, we call the (\textit{outer}) \textit{boundary}
of $A$
the set
\[
\partial A = \bigl\{y \in V \bigl(\mathbb{Z}^2 \bigr) \setminus A
\dvtx y \sim x \mbox{ for some } x \in A \bigr\};
\]
the \emph{internal boundary} of $A$ is the set $\partial_iA =
\partial(A^c)$.
The diameter of the set $A$ is $\operatorname{diam}(A) = \sup\{
\operatorname{dist}(x,y)\dvtx x,y \in
A \}$.

For a configuration $\omega$ and $x,y \in A \subset V(\mathbb{Z}^2)$,
we say $x$
is $\omega$-connected to $y$ in~$A$, and write $x \stackrel{\omega, A}{\longleftrightarrow} y$,
if there exists an $\omega$-open path with endpoints $x$ and $y$, fully
contained in $A$.


The matching graph of $\mathbb{Z}^2$, written $(\mathbb{Z}^{2})^*$,
has the same vertex set as $\mathbb{Z}^2$ and an edge between any two vertices
of the same face of $\mathbb{Z}^2$.
We say that $x$ and $y$ are \mbox{${}^{*}$-\emph{connected}}, and write $x
\mathop{\stackrel{\omega}{\leftrightarrow}\!\!\!{}^*} y$,
if there exists $\omega$-closed path in $(\mathbb{Z}^2)^*$ with
endpoints $x$ and $y$.
The notion of matching graph is proper to site percolation, so
when working with bond percolation it should be replaced by the dual graph.
For more details on matching and dual graphs consult \cite{Grimmett_Percolation}.


For $m,n \in\mathbb{N}$, we define the rectangular box $B(m,n) = [0,m]
\times[0,n]$.
The sides of $B(m,n)$ are the sets $[0,m]\times\{0\}$, $[0,m]\times
\{n\}$, $\{0\}\times[0,n]$ and $\{m\}\times[0,n]$,
and they are called the bottom, top, left-hand and right-hand side,
respectively.
Given a configuration $\omega$, we say $B(m,n)$ is crossed horizontally
if there exists an $\omega$-open path $\gamma$
contained in $B(m,n)$, with one endpoint on the left-hand side and
one on the right-hand side of $B(m,n)$.
We say it is crossed vertically if an $\omega$-open path contained in $B
(m,n)$ connects the top and the bottom.
We write $\mathcal{C}_h(m,n)$ and $\mathcal{C}_v(m,n)$ for the events
that $B(m,n)$ is
crossed horizontally, respectively,   vertically.
If $R$ is a translate of the box $B(m,n)$, we write $\mathcal{C}_h(R)$
and $\mathcal{C}_v(R)$
for the appropriate translations of $\mathcal{C}_h(m,n)$ and $\mathcal
{C}_v(m,n)$.


Finally, we mention a well-known result for standard percolation that
is essential to our analysis.
This type of result was initially proved separately by Russo \cite{Russo} and
Seymour and Welsh \cite{Seymour-Welsh}, hence the name of
Russo--Seymour--Welsh (RSW) result.
For reference we direct the reader to \cite{Grimmett_Percolation}, Theorem~11.70.
Extensions to percolation models on general graphs with the symmetries
mentioned in Remark~\ref{rem:gen_lattice}
are discussed in detail in \cite{Kesten_book}, Section~6.

%
\begin{prop}[(RSW)]\label{prop:bxp}
There exists a constant $\alpha>0$ such that, for all $n\geq1$,
%
%
\begin{equation}
\label{eq:bxp} \mathbb{P}_{p_c} \bigl(\mathcal{C}_h(2n,n)
\bigr) \geq\alpha.
\end{equation}
The analogue holds for *-crossings on the matching graph.
\end{prop}
%

\section{Box-crossing estimates and consequences for forest
fires}\label
{sec:proof_steps}

\subsection{Crossing boxes after the burn}
The proof of Theorem~\ref{thm:main} is based on a~crossing-probability
estimate.
Some additional notation is needed.

Let $R_n = [-2n,2n] \times[0,n]$ and $S_n = [-3n,3n] \times[0,n]$.
For a configuration $\omega$ let $\chi$ be the set of sites $x\in S_n$
which are connected to both the left-hand and right-hand sides of $S_n$
by open paths contained in $S_n$.
Define a configuration $\widetilde\omega$ by setting, for $x \in S_n$,
%
%
\begin{equation}
\label{def:omega_tilde} \widetilde\omega(x) = \cases{0, & \quad $\mbox{if } x \in\chi\cup
\partial \chi$,
\cr
1, & \quad $\mbox{otherwise}$.}
\end{equation}
In other words, the $\omega$-open clusters containing horizontal crossings
of $S_n$
are declared closed in $\widetilde\omega$, as are their boundaries.
All other sites are opened.
The value of $\widetilde\omega$ outside of $S_n$ is irrelevant for
our purposes;
for concreteness we take $\widetilde\omega= 0$ there.
Finally we enhance the vertices inside $R_n$ by setting
%
%
\begin{equation}
\label{def:omega_tilde_delta} \widetilde\omega^\sigma(x) = \cases{\widetilde\omega(x)
\vee\sigma(x), & \quad $\mbox{if } x \in R_n$,
\cr
\widetilde\omega(x),
&\quad $\mbox{otherwise}$.}
\end{equation}

%
\begin{thm}\label{thm:cross}
There exist constants $\delta, \lambda, c >0$ such that, for all $n
\geq1$,
%
%
\begin{equation}
\label{eq:cross} \mathbb{P}_{p_c,\delta} \bigl[\omega\in\mathcal
{C}_h(S_n) \mbox{ and } \widetilde\omega^\sigma
\in\mathcal{C}_v(R_n) \bigr] \leq c n^{-\lambda}.
\end{equation}
\end{thm}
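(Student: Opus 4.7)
The plan is to condition on $\om$ and study, on the event $\Ch(S_n)$, the conditional $\si$-probability of $\wt\om^\de \in \Cv(R_n)$. By planar duality for site percolation on $\ZZ^2$, the existence of an $\om$-open left--right crossing of $S_n$ forces the fattened burned region $B := \chi \cup \pd\chi$ to separate the top from the bottom of $R_n$ in $\ZZ^2$. Let $U_+$, \resp\ $U_-$, denote the $\wt\om$-open connected component of $R_n \setminus B$ that touches the top, \resp\ bottom, side of $R_n$. For $\wt\om^\de \in \Cv(R_n)$ to hold, the random set $\{x \in B : \si(x)=1\}$ must reconnect $U_+$ to $U_-$ inside $R_n$; the simplest such reconnection is a $\si$-open path $\pi \subset B$ from a site in $\pd U_+$ to a site in $\pd U_-$, which I will call a \emph{bridge}.

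Given $\om$, a fixed bridge $\pi = (y_0, \ldots, y_m)$ is $\si$-open with probability $\de^{m+1}$, and a union bound gives
\ben
  \PP_{p_c, \de}\bigl[\om \in \Ch(S_n),\, \wt\om^\de \in \Cv(R_n)\bigr] \le \EE\Bigl[ \ind_{\Ch(S_n)} \sum_\pi \de^{|\pi|}\Bigr],
\een
where the sum runs over all self-avoiding paths $\pi$ in $B$ from $\pd U_+$ to $\pd U_-$, and the expectation is with respect to $\PP_{p_c}$ on $\om$. My goal then reduces to showing that this first-moment expression is $O(n^{-\la})$ for $\de$ small enough.

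The key structural input is that the endpoints $y_0$ and $y_m$ of a bridge must each sit at the interface between $\chi$ and an open-in-$\wt\om$ region, which in turn forces a half-plane three-arm event in $\om$ at scale comparable to $n$: near $y_0$, two $\om$-open arms travel inside $\chi$ towards the left and right sides of $S_n$, while an $\om$-closed dual arm travels through $U_+$ up to the top side of $R_n$, and a symmetric configuration occurs at $y_m$ for $U_-$. Using the half-plane three-arm exponent reviewed in Section~\ref{sec:arms}, the expected number of sites supporting such a configuration is $O(n^{2-\eta})$ for some $\eta > 0$, giving polynomial decay in the count of admissible endpoints. Intermediate sites $y_1, \ldots, y_{m-1}$ further have to belong to $B$, which is itself controlled by a one-arm estimate near each such site.

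The main obstacle is organising the double sum over $(y_0, y_m)$ and over the shape of $\pi$ between them. A naive bound using all self-avoiding paths of length $m$ introduces a combinatorial factor of order $4^m$ per starting point, which is killed by $\de^m$ only once $\de < 1/4$; the endpoint counts must be combined with the arm-event constraints on all intermediate sites in $B$, and careful bookkeeping is needed to avoid double-counting when $B$ has complicated topology. I would address this by restricting to a canonical (e.g.\ lexicographically first) bridge, decomposing according to its length, and applying BK-type independence together with uniform arm-exponent bounds to collapse the sum into $c\, n^{-\la}$ for $\de$ small enough. This delicate counting step is the heart of the argument; the theorem then follows from Markov's inequality applied to the displayed first-moment estimate.
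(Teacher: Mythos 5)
There is a genuine gap here, on two counts. First, your reduction to a single ``bridge'' is not valid: the event $\{\wt\om^\de \in \Cv(R_n)\}$ is not contained in the union, over $\si$-open self-avoiding paths in your set $B=\chi\cup\pd\chi$ joining $\pd U_+$ to $\pd U_-$, of the corresponding bridge events. Since every site of $S_n\setminus B$ is opened in $\wt\om$, a vertical crossing in $\wt\om^\de$ typically alternates between many distinct components of the complement of $B$, using only a few enhanced sites to hop across thin places of $B$ each time; the enhanced sites it uses form several small, widely separated bunches rather than one connected path inside $B$. Your union bound misses exactly these configurations, and they are the dominant ones: they are cheap in powers of $\de$ (few enhanced sites), and the whole difficulty is to show they are expensive for $\om$. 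The paper's proof isolates precisely this scattered set of ``passage points'' $\XX$ along a minimal crossing and sums $\PP_{p_c,\de}(\XX=X)$ over all such sets $X$, organized by a merger tree recording the mutual distances of the bunches.

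Second, the arm-event input is both misidentified and quantitatively insufficient. The relevant local structure at a passage point is a \emph{six-arm} event in the bulk (with a possible defect), at the scale of the distance to the nearest \emph{other} passage point --- not a half-plane three-arm event at scale $n$. The half-plane three-arm exponent equals $2$ exactly (see \eqref{eq:5arm}), so it yields no margin; the proof hinges on the six-arm (and half-plane four-arm) exponent being \emph{strictly larger} than $2$ (Corollary~\ref{cor:la}), which is what makes the sum over passage-point configurations converge and produces the factor $n^{-\la}$ after multiplying by the $O(n^2)$ choices of location. Your own accounting signals the problem: an expected $O(n^{2-\eta})$ admissible endpoints is a \emph{growing} quantity, not a decaying one, and you never explain how the first moment collapses to $O(n^{-\la})$. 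Finally, the ``delicate counting step'' you defer is the actual content of the theorem (Propositions~\ref{prop:good_bound} and~\ref{prop:X}); organizing by bridge length and taking $\de<1/4$ does not address the combinatorics of disconnected bunches interacting across many scales.
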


Similar statements to (\ref{eq:cross}) have been shown to imply
Theorem~\ref{thm:main}, but none has been proved.
See, for instance, \cite{VdBB}, Conjecture 3.2, and \cite{vdBB-forest-fires}, Conjecture~2.1.
Our criterion was inspired by the previous; the slightly different
formulation is particularly
adapted to our proof.

Theorem~\ref{thm:cross} will be proved in Section~\ref{sec:cross}.
For completeness we give a proof of Theorem~\ref{thm:main} from
Theorem~\ref{thm:cross} that follows the steps of \cite{VdBB}.
We start with a corollary which requires some additional notation.

Recall the definition of $\Lambda_n$ from Section~\ref{ssec:further_not}.
Consider some $n \in\mathbb{N}$, and define the annulus $\mathrm
{A}(n,2n) = \Lambda
_{2n} \setminus\Lambda_{n-1}$.
A circuit in $\mathrm{A}(n,2n)$ is a path contained in $\mathrm
{A}(n,2n)$ that
separates the origin from infinity.
For a configuration $\omega$, define a new modification $\check
{\omega}$ of
$\omega$,
by closing all sites that are connected by an $\omega$-open path in
$\mathrm{A}(n,2n)$
to an $\omega$-open circuit in $\mathrm{A}(n,2n)$.
As above, for a second configuration $\sigma$, set $\check{\omega
}^\sigma=
\check\omega\vee\sigma$.

%
\begin{cor} \label{cor:circ}
There exists a constant $\rho>0$ such that, with $\delta$ as in
Theorem~\ref{thm:cross},
\[
\mathbb{P}_{p_c,\delta} \bigl(\partial\Lambda_{n-1}\stackrel{\check{\omega}^\sigma}{\longleftrightarrow}\partial\Lambda_{2n} \bigr)
\leq1-\rho
\]
for all $n\geq1$.
\end{cor}

Before we dive in the proofs of Corollary~\ref{cor:circ} from
Theorem~\ref{thm:cross}
and of Theorem~\ref{thm:main} from Corollary~\ref{cor:circ},
let us turn to some other implications of
Theorem~\ref{thm:cross} and Corollary~\ref{cor:circ}.

\subsection{Consequences for forest fires}

The following was stated as a conditional result in \cite{VdBB}. Our
results imply it.
%
%
\begin{thm}[(Theorem~4.1 of \cite{VdBB})]\label{thm:no_ff}
The infinite-parameter forest fire process does not exist on $\mathbb{Z}^2$.
\end{thm}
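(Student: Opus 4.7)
The plan is to deduce Theorem~\ref{thm:no_ff} as an immediate consequence of Theorem~\ref{thm:main} by invoking the conditional non-existence result established by van den Berg and Brouwer. In \cite{VdBB} it was shown that if there exists some $\delta>0$ such that $\theta(p,\delta)=0$ for every $p>p_c$, then no infinite-parameter forest-fire process on $\ZZ^2$ can exist. The uniform-in-$p$ statement required as their hypothesis is precisely the conclusion of our Theorem~\ref{thm:main}, so nothing further is needed beyond citing the two results together. First I would verify that the notions of self-destructive percolation used in \cite{VdBB} coincide with the one set up in Section~\ref{sec:model}, and that their ``$\delta$ uniform in $p>p_c$'' hypothesis is literally what our Theorem~\ref{thm:main} yields; both checks are essentially definitional.

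For the reader's benefit I would then sketch the intuition underlying the conditional result of \cite{VdBB}. Suppose for contradiction that a translation-invariant forest-fire process $(\eta_t)_{t\geq 0}$ exists on $\ZZ^2$, and let $t_c=-\log(1-p_c)$ be the time at which uncontrolled Poisson opening would first reach density $p_c$. For $t<t_c$ no infinite cluster can have appeared, so no burning has occurred and $\eta_t$ must have the law of Bernoulli$(1-e^{-t})$ site percolation. Just after $t_c$, the burn rule would instantaneously close every infinite cluster, so that for small $s>0$ the configuration $\eta_{t_c+s}$ is close in law to the self-destructive configuration $\ol\om^{\delta(s)}$ with $p=p_c$ and $\delta(s)\approx s$. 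Theorem~\ref{thm:main} tells us that, for all $s$ with $\delta(s)\leq\delta$, this configuration has no infinite cluster almost surely, hence no burning can occur in the positive time interval $(t_c,t_c+s)$. But during this interval new sites are being opened at rate $1$ with no compensating burning, so the open-site density strictly exceeds $p_c$, in contradiction with the fact that a translation-invariant configuration of density exceeding $p_c$ (above criticality) must contain an infinite cluster, which should have been burned.

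The main obstacle in turning this heuristic into a rigorous argument is the definitional subtlety of ``the process exists'': one must rule out every conceivable translation-invariant infinite-volume limit of finite-box forest-fire processes, rather than a single candidate. This step, however, was carried out carefully in \cite[Thm.~4.1]{VdBB}; it requires no probabilistic input beyond a uniform positive lower bound on $\delta_c(p)$ for $p>p_c$. Consequently, the only non-trivial work on our side is already done by Theorem~\ref{thm:main}, and the proof of Theorem~\ref{thm:no_ff} reduces to combining these two statements.
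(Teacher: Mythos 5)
There is a genuine gap: you have misidentified the hypothesis of the conditional result in \cite{VdBB}. Theorem~4.1 of \cite{VdBB} is \emph{not} stated under the assumption that $\theta(p,\de)=0$ for all $p>p_c$ and some uniform $\de>0$ (i.e.\ under the conclusion of Theorem~\ref{thm:main}); it is stated conditionally on \cite[Conj.~3.2]{VdBB}, a quantitative finite-size crossing estimate at $p_c$, and its proof uses only the operative consequence of that conjecture, namely \cite[Lemma~3.4]{VdBB} --- an annulus-crossing bound for the circuit-destroyed configuration, which is exactly Corollary~\ref{cor:circ} of this paper. The logical flow in the paper is therefore Theorem~\ref{thm:cross} $\Rightarrow$ Corollary~\ref{cor:circ} $\Rightarrow$ (non-existence of forest fires, via the proof of \cite{VdBB}), and separately Corollary~\ref{cor:circ} $\Rightarrow$ Theorem~\ref{thm:main}. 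Your proposal inverts this: you try to feed the \emph{qualitative} endpoint Theorem~\ref{thm:main} back into \cite{VdBB} as if it were their hypothesis, but no such implication is on record, and the paper explicitly warns that even \cite[Conj.~3.2]{VdBB} ``is not obviously implied by our results.''

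The reason the qualitative statement does not suffice is visible in your own heuristic. You apply Theorem~\ref{thm:main} at $p=p_c$, where it is false: at criticality nothing is burned, so $\ol\om^{\de}$ is ordinary percolation at density $p_c+(1-p_c)\de$ and $\theta(p_c,\de)>0$ for every $\de>0$. The forest-fire configuration just after $t_c$ must instead be compared with self-destructive configurations at densities $p\downarrow p_c$, and the comparison can only be made \emph{locally}: after the first fire touching a large annulus, a circuit of the burnt cluster lies in that annulus, so the subsequent configuration there is dominated by $\check\om^{\de}$, and one needs a crossing bound for $\check\om^{\de}$ that is uniform in $p$ near $p_c$ and uniform in the scale $n$. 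That is precisely Corollary~\ref{cor:circ}, a strictly stronger, finite-size statement than ``$\theta(p,\de)=0$ for all $p>p_c$.'' To repair your argument, replace the citation of Theorem~\ref{thm:main} by the observation that Corollary~\ref{cor:circ} is equivalent to \cite[Lemma~3.4]{VdBB} and that the proof of \cite[Thm.~4.1]{VdBB} rests solely on that lemma.
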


In \cite{VdBB} the above was stated conditionally on \cite{VdBB}, Conjecture~3.2.
While the latter is not obviously implied by our results,
its main consequence, \cite{VdBB}, Lemma~3.4, is equivalent to
Corollary~\ref{cor:circ} above.
The proof of the theorem in \cite{VdBB} is based solely on \cite{VdBB}, Lemma 3.4.

The intuition behind Theorem~\ref{thm:no_ff} is the following.
Suppose an infinite-parameter forest fire process is defined, and let
$t_c$ (defined by $1 - e^{-t_c} = p_c$)
be the time when fires start to appear.
No fires ignite on $[0,t_c]$ since no infinite cluster is produced.
But for any $t > t_c$ at least one infinite cluster was produced and
burned before~$t$.
Thus an infinity of burning times have to accumulate after~$t_c$.
But Theorem~\ref{thm:main} suggests that there exists a universal
$\tau
> 0$ such that,
after one fire, the process needs at least time $\tau$ to recover and
recreate a new infinite cluster.
This leads to a contradiction, hence the nonexistence of the process.

In \cite{vdBB-forest-fires} van den Berg and Brouwer stated several results
for finite-parameter forest fires conditionally on \cite{vdBB-forest-fires}, Conjecture~2.1.
Our Theorem~\ref{thm:cross} implies this conjecture, and hence their results.
We will state two of them. In the following $\eta^{[N]}$ denotes the
$N$-parameter forest fire process.
We say $\eta^{[N]}$ has a fire in $\Lambda_m$ when a cluster intersecting
$\Lambda_m$ reaches size $N$ and is burned.
%
%
\begin{thm}[(Theorem 4.2 and Proposition 4.3 of \cite{vdBB-forest-fires})]
There exists $t>t_c$ such that for all $m \geq0$,
\begin{eqnarray*}
\liminf_{N\rightarrow\infty}\mathbb{P} \bigl(\eta^{[N]}\mbox{ has a
fire in } \Lambda_m \mbox{ before time } t \bigr) & \leq & 1/2,
\\
\lim_{N\rightarrow\infty}\mathbb{P} \bigl(\eta^{[N]}\mbox{ has at
least } 2 \mbox{ fires in } \Lambda_m \mbox{ before time } t \bigr)
&=& 0.
\end{eqnarray*}
\end{thm}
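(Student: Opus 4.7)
The plan is to deduce both statements from Corollary~\ref{cor:circ}, mirroring the strategy of \cite{vdBB-forest-fires}, who proved exactly this theorem conditionally on their Conjecture~2.1. Since (up to reformulation) that conjecture is what Corollary~\ref{cor:circ} now provides, the remaining task is to make explicit the comparison between the forest fire configuration and the self-destructive percolation configuration $\check\om^\de$.

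Parametrise the $N$-parameter forest fire $\eta^{[N]}$ so that, ignoring burning, a site is open at time $t$ with probability $p(t) = 1 - e^{-t}$, so that $p(t_c) = p_c$. Fix $m \geq 0$ and take $n$ much larger than $m$. The core step would be a coupling: as long as no fire has yet touched $\Ball_m$, the configuration of $\eta^{[N]}$ restricted to $\Ball_{2n}$ at time $t$ is stochastically dominated by $\check\om^\de$ with $p = p_c$ and $\de = \de(t) := (p(t)-p_c)/(1-p_c)$. The reason is that every fire in $[t_c,t]$ that affects $\Ball_{2n}$ burns a cluster of size at least $N$, and for $N$ large such a cluster must either lie entirely outside $\Ball_{2n}$ (irrelevant for connections across $\Ann(n,2n)$) or surround $\Ball_{n-1}$ by an open circuit inside $\Ann(n,2n)$; the burning of that circuit and its interior is precisely the modification defining $\check\om$, while the subsequent opening during $[t_c,t]$ supplies the enhancement by $\si \sim \PP_{\de(t)}$.

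Granted this coupling, Part~(1) follows by choosing $t = t_c + \eps$ with $\eps$ so small that $\de(t)$ is below the threshold of Corollary~\ref{cor:circ}: an ignition in $\Ball_m$ by time $t$ forces a connection from $\pd \Ball_{n-1}$ to $\pd \Ball_{2n}$ in the coupled configuration, and the corollary bounds the probability of this by $1-\rho$ uniformly in $n$ and $N$. If $\rho < 1/2$, one iterates over $\lceil \log N \rceil$ disjoint concentric annuli $\Ann(n_k,2n_k)$ all enclosing $\Ball_m$ to push the bound below $1/2$ in the liminf. For Part~(2), one conditions on the time $\tau \leq t$ of the first fire in $\Ball_m$; the post-fire configuration restarted from $\tau$ admits the same domination by $\check\om^\de$, and applying the iterated corollary at scales $n_k(N) \to \infty$ shows that the probability of producing a second size-$N$ cluster through $\Ball_m$ by time $t$ tends to $0$.

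The main obstacle is the coupling step itself: a burning cluster meeting both $\Ball_m$ and $\Ball_{2n}^c$ need not, a priori, contain an open circuit in the specific annulus $\Ann(n,2n)$, so one must either argue that on the complementary event the connection $\pd \Ball_{n-1} \xlra{} \pd \Ball_{2n}$ was already present before the burning (and hence controlled by Corollary~\ref{cor:circ}), or choose $n$ carefully as a diverging function of $N$ so that large burned clusters are forced to exhibit such a circuit. Disentangling these geometric contingencies, and verifying that the many burnings that have occurred before time $t$ can be absorbed into the single modification $\om \mapsto \check\om^\de$, is the delicate part of the argument. This transcription of the forest fire dynamics into self-destructive percolation language is carried out in detail in \cite{vdBB-forest-fires} under the hypothesis of their Conjecture~2.1, so once Corollary~\ref{cor:circ} is in hand the remainder is bookkeeping.
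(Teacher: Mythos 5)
Your proposal matches the paper's treatment: the paper offers no proof of this statement beyond observing that Theorem~\ref{thm:cross} establishes \cite[Conj.~2.1]{vdBB-forest-fires}, under which the stated results were already proved in that reference, and your deferral of the forest-fire-to-self-destructive-percolation coupling to \cite{vdBB-forest-fires} is exactly what the paper does. The only caveats are that the paper identifies the needed input as Theorem~\ref{thm:cross} itself (Conj.~2.1 being a finite-size crossing criterion analogous to \eqref{eq:cross}) rather than Corollary~\ref{cor:circ}, and that your attempted reconstruction of the coupling---in particular the claim that a burned cluster of size at least $N$ meeting $\Ball_{2n}$ must contain an open circuit in the prescribed annulus $\Ann(n,2n)$---is a genuine gap you correctly flag but should not present as the route actually taken, since that work is done in the cited reference and not in this paper.
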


The interested reader is referred to \cite{VdBB} and \cite{vdBB-forest-fires}
for precise definitions of forest fires and more details.
We conclude the section with the proofs of Corollary~\ref{cor:circ}
and Theorem~\ref{thm:main} given Theorem~\ref{thm:cross}.

\begin{figure}[b]

\includegraphics{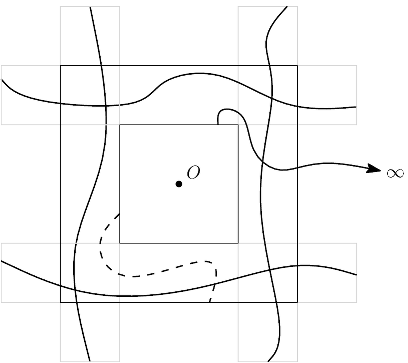}

\caption{A situation with $\omega\in C_n$ and $\partial\Lambda
_{n-1} \stackrel{\omega}{\leftrightarrow}
\infty$ (the bold paths are open in $\omega$)
and such that there exists an $\overline{\omega}^\sigma$-open
crossing of the
annulus $\mathrm{A}(n,2n)$ (dashed path).
Then $\overline{\omega}^\sigma\leq\check{\omega}^\sigma$, and
there exists an
$\check{\omega}^\sigma$-open crossing in the easy direction of one
of the
four rectangles forming $\mathrm{A}(n,2n)$.
Note that any site of a $\omega$-open horizontal crossing of a gray
rectangle is closed in $\check{\omega}$.}
\label{fig:annulus_cross}
\end{figure}

\subsection{Proofs}

Figure~\ref{fig:annulus_cross} sums up the proofs of both
Corollary~\ref
{cor:circ} and Theorem~\ref{thm:main}.

\begin{pf*}{Proof of Corollary~\ref{cor:circ} from
Theorem~\ref{thm:cross}}
For $n \geq1$ denote the four $6n \times n$ rectangles surrounding
$\Lambda_n$ by
\begin{eqnarray*}
S_B &=& [-3n,3n] \times[-2n,-n], \qquad S_T = [-3n,3n]
\times[n,2n],
\\
S_L &=& [-2n,-n] \times[-3n,3n], \qquad S_R = [n,2n]
\times[-3n,3n].
\end{eqnarray*}
Let $R_B = [-2n,2n] \times[-2n,-n]$, and define similarly $R_T,R_L$
and $R_R$.
Also let $C_n = \mathcal{C}_h(S_B) \cap\mathcal{C}_h(S_T)\cap
\mathcal{C}_v(S_L) \cap\mathcal{C}_v(S_R)$
and note that,
if $\omega\in C_n$, then $\omega$ contains an open circuit in
$\mathrm{A}(n,2n)$.
By Proposition~\ref{prop:bxp} and the FKG inequality for regular percolation,
there exists $\rho> 0$ such that $\mathbb{P}_{p_c}(C_n) \geq2\rho$
for all
$n \geq1$.

%

Fix $\delta$ as in Theorem~\ref{thm:cross}.
Let $\omega\in C_n$ and $\sigma$ be such that $\check{\omega
}^\sigma$ contains an
open path
$\gamma$ between $\partial\Lambda_{n-1}$ and $\partial_i\Lambda_{2n}$.
Then it is easy to see that $\gamma$ contains a crossing in the easy
direction of one of the rectangles $R_B,R_T,R_L$ and $R_R$.
In other words,
%
%
\begin{equation}
\label{eq:pf_circ_1}
\check{\omega}^\sigma\in\mathcal{C}_v(R_B)
\cup \mathcal{C}_v(R_T) \cup\mathcal{C}_h(R_L)
\cup\mathcal{C}_h(R_R).
\end{equation}
%
Suppose, for instance, that $\check{\omega}^\sigma\in\mathcal{C}_v(R_B)$.
Since $\omega\in C_n$, all sites connected to a horizontal crossings of
$S_B$ are closed in $\check{\omega}$,
and (\ref{eq:cross}) implies that
\[
\mathbb{P}_{p_c,\delta} \bigl[\omega\in C_n \mbox{ and
} \check{ \omega}^\sigma\in\mathcal{C}_v(R_B)
\bigr] \leq c n^{-\alpha}.\
\]
This combined with by (\ref{eq:pf_circ_1}), and the bound $\mathbb{P}
_{p_c}(C_n) \geq2\rho$ gives
\begin{eqnarray*}
&& \mathbb{P}_{p_c,\delta} \bigl(\partial\Lambda_{n-1} \stackrel{\check{\omega}^\sigma}{\longleftrightarrow}\partial\Lambda_{2n} \bigr)\\
&& \qquad \leq  \mathbb {P}_{p_c,\delta} \bigl(\omega\in C_n \mbox{ and }
\partial \Lambda _{n-1}\stackrel{\check{\omega}^\sigma}{\longleftrightarrow} \partial \Lambda_{2n} \bigr) + \mathbb
{P}_{p_c,\delta}(\omega\notin C_n)
\\
&&\qquad  \leq  4cn^{-\alpha} + 1 - 2\rho.
\end{eqnarray*}
Taking $n$ large enough in the above completes the proof of
Corollary~\ref{cor:circ}.
\end{pf*}

\begin{pf*}{Proof of Theorem~\ref{thm:main} from
Corollary~\ref{cor:circ}}
Corollary~\ref{cor:circ} gives crossing probability estimates for
measures $\mathbb{P}_{p_c,\delta}$.
We start by extending these to measures $\mathbb{P}_{p,\delta'}$,
with $p>p_c$.
Let $\delta> 0$ be given by Theorem~\ref{thm:cross}.
Fix some $p > p_c$, and let $\delta' >0$ be such that
$p + (1-p)\delta' \leq p_c + (1-p_c)\delta$.

It is easy to check (see, e.g., \cite{VdBB}, Corollary~2.4)
that the configuration $\check\omega^\sigma$ obtained from $\mathbb
{P}_{p_c, \delta}$
stochastically dominates
that obtained from $\mathbb{P}_{p, \delta'}$.
In particular,
\[
\mathbb{P}_{p_c,\delta} \bigl(\partial\Lambda_{n-1}\stackrel{\check{\omega}^\sigma}{\longleftrightarrow}\partial\Lambda_{2n} \bigr)
\geq\mathbb {P}_{p,\delta'} \bigl( \partial\Lambda_{n-1}\stackrel{\check{\omega}^\sigma}{\longleftrightarrow}\partial\Lambda_{2n}
\bigr).
\]
This together with Corollary~\ref{cor:circ} implies that
\[
\mathbb{P}_{p,\delta/2} \bigl(\partial\Lambda_{n-1}\stackrel{\check{\omega}^\sigma}{\longleftrightarrow}\partial\Lambda_{2n} \bigr)
\leq1 - \rho
\]
for all $p$ sufficiently close to $p_c$ and all sufficiently large $n$.
We claim that the above yields $\theta(p, \delta/2) = 0$.

Since $p > p_c$, the probability that $\partial\Lambda_{n-1}$ does
not have
an $\omega$-open path to $\infty$
is at most $\rho/2$ for $n$ sufficiently large.
Moreover, if $\partial\Lambda_{n-1} \stackrel{\omega}{\leftrightarrow} \infty
$, then $\overline{\omega} \leq
\check{\omega}$.
Hence, for all sufficiently large $n$,
\begin{eqnarray*}
\mathbb{P}_{p,\delta/2} \bigl(\partial\Lambda_{n-1} \stackrel{\overline{\omega}^\sigma}{\longleftrightarrow} \infty \bigr) &\leq &
\mathbb{P}_{p,\delta/2} \bigl(\partial\Lambda _{n-1}\stackrel{\check{\omega}^\sigma}{\longleftrightarrow} \Lambda_{2n} \bigr) +
\mathbb{P}_{p,\delta/2} \bigl(\partial \Lambda_{n-1}\stackrel{\omega} {
\nleftrightarrow} \infty\bigr)
\\
&\leq & 1 - \rho+ \rho/2.
\end{eqnarray*}
The event $\{\overline{\omega}^\sigma\mbox{ contains an infinite
cluster}\}$ is
translation invariant;
thus its probability is either $0$ or $1$. The above excludes the
latter, hence $\theta(p, \delta/2) = 0$.
\end{pf*}

\section{Arm events}\label{sec:arms}



A \emph{color sequence} of\vspace*{1pt} length $k$ is a sequence $\varsigma\in\{
0,1\}^k$.
Fix such a color sequence $\varsigma$, a vertex $u\in\mathbb{Z}^2$
and integers $n
\leq N$.
We write $\mathcal{A}_\varsigma(u;n,N)$ for the event that there
exist $k$ pairwise
disjoint paths
$\gamma_1, \ldots, \gamma_k$ such that, for $j = 1, \ldots, k$:
\begin{itemize}
\item if $\varsigma_j =1$, then $\gamma_j$ is a open path on $\mathbb
{Z}^2$, and if
$\varsigma_j = 0$, then $\gamma_j$ is a closed path on $(\mathbb{Z}^2)^*$;
\item$\gamma_j \subset\Lambda_N(u) \setminus\Lambda_n(u)$ and has one
endpoint in $\partial\Lambda_n(u)$ and the other in $\partial
_i\Lambda_N(u)$;
\item the endpoints of $\gamma_1, \ldots, \gamma_k$ are placed in
counter-clockwise order on $\partial\Lambda_n(u)$.
\end{itemize}
The paths $\gamma_j$ are called \textit{arms}; and the event
$\mathcal{A}_\varsigma
(u;n,N)$ is called an \emph{arm event}.
When $u$ is omitted, it is assumed to be the origin.
The probabilities of arm events are denoted by $\pi_\varsigma(n,N) =
\mathbb{P}
_{p_c} (\mathcal{A}_\varsigma(n,N))$.

For very small values of $n$, $\mathcal{A}_\varsigma(n,N)$ could be
empty because
of geometric constraints.
It will be convenient to redefine $\mathcal{A}_\varsigma(n,N)$ as
$\mathcal{A}_\varsigma(|\varsigma
|,N)$ when $n \leq|\varsigma|$.
Let $\mathcal{A}_\varsigma(n) = \mathcal{A}_\varsigma(0,n)$ and
$\pi_\varsigma(n) = \pi_\varsigma(0,n)$.


A related notion is that of \emph{half-plane arm events}.
Let $\mathbb{H}= \mathbb{R}\times[0,\infty)$ be the upper half-plane.
Define $\mathcal{A}_\varsigma^{hp}(n,N)$ as the event $\mathcal
{A}_\varsigma(n, N)$,
with the additional restriction that the arms $\gamma_1, \ldots,
\gamma_k$ are
all contained in $\mathbb{H}$
and that $\gamma_1$ is the right-most arm.

The notation for arm events extends to half-plane arm events, thus\break 
$\pi_\varsigma^{hp}(n,N) = \mathbb{P}_{p_c} (\mathcal{A}_\varsigma
^{hp}(n,N))$, $\mathcal{A}_\varsigma
^{hp}(n) = \mathcal{A}_\varsigma^{hp}(0,n)$ and $\pi_\varsigma
^{hp}(n) = \pi_\varsigma^{hp}(0,n)$.


Here are two well-known properties of arm events.
%

\begin{prop}\label{pro:Nolin}
Fix a color sequence $\varsigma$.
There exists a constant {$c = c(\varsigma) > 0$} such that, for $0
\leq n
\leq m \leq N$,
%
%
\begin{eqnarray}
\label{eq:quasimult} c \pi_\varsigma(n,m) \pi_\varsigma(m,N) &\leq &
\pi_\varsigma(n,N) \leq\pi_\varsigma(n,m) \pi_\varsigma(m,N),
\\
\label{eq:short_arm} \pi_\varsigma(n,2n) & \geq & c.
\end{eqnarray}
The above also holds for half-plane arm events.
\end{prop}

The proposition is not specific to site percolation on $\mathbb{Z}^2$;
the only thing needed for the proof is the crossing estimate (\ref{eq:bxp}).
The bound (\ref{eq:quasimult}) first appeared in \cite{Kesten87}, combination of Lemmas 4~and~6,
while (\ref{eq:short_arm}) is a simple consequence of (\ref{eq:bxp}).
For a modern treatment of Proposition~\ref{pro:Nolin}
and for other proofs in this section, we refer the reader to the survey
\cite{Nolin}.

We also need to introduce the notion of \emph{arms with defects}.
Let $\mathcal{A}_\varsigma^*(n,N)$ be the set of configurations
$\omega$ such that
there exists a point $u$
and a configuration $\omega'$ equal to $\omega$ outside $\Lambda
_3(u)$ with $\omega
' \in\mathcal{A}_\varsigma(n,N)$.
All the notation defined above extends to arm events with defects, with
the attached asterisk.

%
\begin{prop}[(Proposition 18 of \cite{Nolin})]\label{prop:arms_defects}
Fix a color sequence $\varsigma$.
There exists a positive constant $C = C(\varsigma)$ such that, for all $n
\leq N$,
%
%
\begin{eqnarray}
\mathbb{P}_{p_c} \bigl(\mathcal{A}_\varsigma^*(n,N) \bigr) & \leq
& C \bigl(1+ \log(N/n) \bigr) \pi_\varsigma(n,N)\quad \mbox{and}
\nonumber
\\[-8pt]
\label{eq:arm_defects}
\\[-8pt]
\nonumber
\mathbb{P}_{p_c} \bigl(\mathcal{A}_\varsigma^{hp *}(n,N)
\bigr) & \leq & C \bigl(1+ \log(N/n) \bigr) \pi_\varsigma^{hp}(n,N).
\end{eqnarray}
\end{prop}

%
\begin{rem}
In \cite{Nolin} an arm event with a defect is defined as a
modification of the event $\mathcal{A}_\varsigma(n,N)$
where the arms are allowed to have at most one vertex of the opposing color.
Our definition is slightly different; nevertheless, Nolin's proof
readily extends to our case.
\end{rem}


In the rest of the paper, the following types of arm events will play a
special role.
Call $\mathcal{A}_1$, $\mathcal{A}_5$ and $\mathcal{A}_6$ the event
$\mathcal{A}_\varsigma$ with
$\varsigma= (1)$, $\varsigma= (1,0,0,1,0)$ and $\varsigma=
(0,1,0,0,1,0)$, respectively.
In addition, write $\mathcal{A}_3^{hp}$ and $\mathcal{A}_4^{hp}$ for
the event $\mathcal{A}
^{hp}_\varsigma$
with $\varsigma= (1,0,1)$ and $\varsigma= (1,0,0,1)$, respectively.
The same notation applies to~$\pi$.

The following is a well-known consequence of (\ref{eq:bxp}).
See \cite{Nolin}, Theorem~24,  and~\cite{Grimmett_Percolation}, Theorem~11.89.
%

\begin{prop}
There exist constants $\lambda, c, C >0$ such that for all $n \leq N$,
%
%
\begin{eqnarray}
\label{eq:1arm} \pi_1(n,N) &\leq & (N/n)^{-\lambda},
\\
\label{eq:3arm} c (N/n)^{-2} & \leq & \pi_5(n,N) \leq C
(N/n)^{-2},
\\
\label{eq:5arm} c (N/n)^{-2} &\leq & \pi_3^{hp}(n,N)
\leq C (N/n)^{-2}.
\end{eqnarray}
\end{prop}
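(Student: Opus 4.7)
I would prove the three bounds independently, using only the box-crossing estimate \eqref{eq:bxp}, the FKG inequality, quasi-multiplicativity \eqref{eq:quasimult}, and planar topology. The first is elementary; the other two are the classical ``universal'' arm exponents, whose sharpness at $2$ is the nontrivial point.

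For the one-arm bound \eqref{eq:1arm}, I would partition the annulus $\Ball_N\setminus\Ball_{n-1}$ into the $K=\lfloor \log_2(N/n)\rfloor -1$ disjoint dyadic sub-annuli $\Ann(2^k n,2^{k+1}n)$. Applying \eqref{eq:bxp} to the four rectangles that tile one such sub-annulus, followed by the FKG inequality, yields an absolute constant $c_0>0$ such that each sub-annulus contains a surrounding dual-closed (matching-graph) circuit with probability $\geq c_0$. These circuit events depend on pairwise disjoint vertex sets and are therefore independent. On $\Arm_1(n,N)$ none of the $K$ circuits can occur, so $\pi_1(n,N) \leq (1-c_0)^K \leq C(N/n)^{-\la}$ with $\la = -\log_2(1-c_0) > 0$.

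For the polynomial lower bounds in \eqref{eq:3arm} and \eqref{eq:5arm}, I would use the standard gluing construction: at a single dyadic scale $(n, 2n)$ the five arms (respectively the three half-plane arms) of the prescribed colors are forced by a finite recipe of open and dual-closed crossings of sector-shaped boxes of bounded aspect ratio covering $\Ann(n, 2n)$; \eqref{eq:bxp} together with FKG supplies a uniform lower bound $c_1 > 0$ at that scale. Combined with the standard separation-of-arms lemma (which upgrades the event to one with uniformly well-spaced endpoints) and the left half of \eqref{eq:quasimult}, iteration across $\log_2(N/n)$ scales yields a polynomial lower bound; matching with the upper bound below then pins the exponent at exactly $2$.

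The upper bounds $\pi_5(n,N), \pi_3^{hp}(n,N) \leq C(N/n)^{-2}$ are the core of the proposition and the main obstacle, because the exponent $2$ is sharp and does not emerge from any naive iteration of RSW. I would follow the Kesten--Sidoravicius--Zhang counting scheme. For the full-plane case, let $X_N$ denote the number of vertices $v \in \Ball_N$ that are the centre of the 5-arm event $\Arm_5(v; 1, N/2)$. Translation invariance and \eqref{eq:quasimult} give $\EE[X_N] \asymp N^2 \pi_5(1,N)$, reducing matters to an a priori bound $\EE[X_N] = O(1)$. The essential deterministic input is planar-topological: a vertex with the specified five arms to distance $N$ is a meeting point of three distinct macroscopic open clusters and two distinct macroscopic dual-closed clusters, and an Euler-type count on the planar map formed by cluster boundaries restricts the number of such meeting points in $\Ball_N$ to an absolute constant. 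A matching lower bound $\EE[X_N] \geq c$, produced by an explicit RSW construction of at least one 5-arm centre in the box, then yields $\pi_5(1,N) \asymp N^{-2}$; quasi-multiplicativity propagates this to $\pi_5(n,N) \leq C (n/N)^2$. The half-plane $3$-arm bound is obtained by an identical argument, with $\pd\HH$ absorbing two of the five arms and reducing the configuration to three arms hitting $\pd\HH$.
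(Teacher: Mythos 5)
The paper never proves this proposition: it is quoted as a standard consequence of the box-crossing estimate \eqref{eq:bxp}, with pointers to Nolin and to Grimmett's book. So the only question is whether your from-scratch sketch is sound. Your one-arm argument (independent dual circuits in disjoint dyadic annuli) is correct, and for the full-plane five-arm bound \eqref{eq:3arm} you correctly identify the Kesten--Sidoravicius--Zhang counting scheme: $\EE[X_N]\asymp N^2\pi_5(1,N)$, a deterministic/topological $O(1)$ upper bound on the number of five-arm centres, and an RSW construction giving $\EE[X_N]\ge c$, which together yield both inequalities; quasi-multiplicativity then transfers the estimate from $(1,N)$ to $(n,N)$. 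Two small slips there: for the colour sequence $(1,0,0,1,0)$ a five-arm centre meets two distinct macroscopic open clusters and three closed ones, not three and two; and the sentence claiming that iterating RSW over scales and then ``matching with the upper bound'' pins the lower-bound exponent at $2$ is not a valid deduction --- a per-scale constant only yields some exponent $\alpha$ possibly larger than $2$, and an upper bound cannot improve a lower bound. Your third paragraph supplies the correct mechanism for the lower bound, so this is only a redundancy.

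The genuine gap is the half-plane three-arm estimate \eqref{eq:5arm}, which you dispatch as ``an identical argument'' with the boundary absorbing two arms. It cannot be identical. The natural candidate set is the one-dimensional boundary segment, of cardinality $\asymp N$, so even a perfect deterministic $O(1)$ bound on the number of boundary points carrying the three-arm event would only give $N\,\pi_3^{hp}(1,N)\le C$, i.e.\ the exponent $1$; and the matching lower bound via ``at least one centre with positive probability'' fails outright, since $\EE\big[\#\{z\in[0,N]\times\{0\}:\Arm_3^{hp}(z;1,N)\text{ occurs}\}\big]\asymp N\cdot N^{-2}\to 0$, so typically no such boundary point exists at all. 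Reaching the exponent $2$ requires counting over a genuinely two-parameter family (for instance pairs of boundary points joined to distance $N$ by disjoint open arms separated by a closed arm, with the two-arm half-plane estimate $\pi^{hp}_{(1,0)}(n,N)\asymp n/N$ controlling the small scales), and this is exactly the delicate part of \eqref{eq:5arm}. As written your sketch only yields $\pi_3^{hp}(n,N)\le C(N/n)^{-1}$, which is not enough to give the exponent strictly larger than $2$ in Corollary~\ref{cor:la - 1}, and hence not enough for the convergence of the sums in Section~\ref{sec:cross}.
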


As a consequence of the above, we have the following estimates for the
probabilities of arm events of interest to us.
The proof is a simple application of Reimer's inequality \cite{Reimer}.
%

\begin{cor}\label{cor:la - 1}
There exist constants $c, \lambda>0$ such that, for all $n \leq N$,
\[
\pi_6(n,N) \leq c (N/n)^{-(2+\lambda)}\quad \mbox{and} \quad \pi
_4^{hp}(n,N) \leq c (N/n)^{-(2+\lambda)}.
\]
\end{cor}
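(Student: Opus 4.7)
The plan is to apply Reimer's inequality to split each of the polychromatic arm events into a ``universal-exponent'' subevent, whose probability is controlled by \eqref{eq:3arm} or \eqref{eq:5arm}, disjointly occurring with a single arm whose probability is controlled by \eqref{eq:1arm}. The gain of one extra polynomial factor $(N/n)^{-\lambda}$ over the universal exponent $2$ will come from this extra one-arm, and the disjointness required to multiply probabilities will come from the fact that, in each of the target arm events, the witnessing arms are by definition pairwise disjoint.

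For the bulk event $\Arm_6$ with color sequence $\sigma=(0,1,0,0,1,0)$, I would proceed as follows. If $\omega\in\Arm_6(n,N)$ then there exist six pairwise disjoint arms $\gamma_1,\dots,\gamma_6$ in the annulus $\Ball_N\setminus\Ball_{n-1}$ realising these colors. Discarding $\gamma_1$ leaves five pairwise disjoint arms of colors $(1,0,0,1,0)$, which is the polychromatic $5$-arm event $\Arm_5$ addressed in \eqref{eq:3arm}; meanwhile $\gamma_1$ alone witnesses a (closed dual) $1$-arm event. The two collections of arms are pairwise vertex-disjoint, so the two events occur disjointly in the sense of the BK/Reimer framework. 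Applying Reimer's inequality, together with the fact that the $1$-arm bound \eqref{eq:1arm} holds identically for closed dual arms by the matching-graph symmetry of Proposition~\ref{prop:bxp},
\begin{align*}
\pi_6(n,N)\;\leq\;\pi_5(n,N)\,\pi_1(n,N)\;\leq\;C\,(N/n)^{-2}\cdot (N/n)^{-\lambda},
\end{align*}
which yields the desired bound with a new constant $c$.

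For the half-plane event $\Arm_4^{hp}$ with color sequence $(1,0,0,1)$ the same method applies. Given four pairwise disjoint arms in $\HH$ of these colors, drop (say) the rightmost arm $\gamma_4$; the remaining three arms of colors $(1,0,0)$ witness the polychromatic half-plane event $\Arm_3^{hp}$, whose probability is controlled by \eqref{eq:5arm}, while $\gamma_4$ witnesses a half-plane $1$-arm event. Reimer's inequality now gives $\pi_4^{hp}(n,N)\le \pi_3^{hp}(n,N)\,\pi_1^{hp}(n,N)$, and since every half-plane $1$-arm trivially witnesses a full-plane $1$-arm, $\pi_1^{hp}(n,N)\le\pi_1(n,N)\le(N/n)^{-\lambda}$, which closes the second bound.

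The only point that requires a little care is the verification that Reimer's inequality genuinely applies: one must check that the disjointness of the arms $\gamma_1,\dots,\gamma_k$ in the definition of an arm event translates into the disjoint-witness formulation of Reimer, i.e.\ that the restrictions of $\omega$ to the two disjoint vertex sets used by the two sub-collections suffice to certify each sub-event. This is immediate from the definition of $\Arm_\sigma$ because the arms are required to be self-avoiding and pairwise vertex-disjoint, and an arm is open (resp.\ closed-dual) purely as a property of $\omega$ on its own vertices (resp.\ on a neighborhood that can be taken disjoint from other arms after inflating slightly and using quasi-multiplicativity \eqref{eq:quasimult} to absorb the inflation). I do not expect any serious obstacle beyond this bookkeeping.
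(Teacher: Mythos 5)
Your decomposition is exactly the paper's intended argument: the paper's proof is the single sentence ``a simple application of Reimer's inequality,'' and the application it has in mind is precisely peeling one arm off each polychromatic event so that the remainder is the universal-exponent event of \eqref{eq:3arm} or \eqref{eq:5arm}, with the extra factor $(N/n)^{-\la}$ coming from \eqref{eq:1arm}. Your treatment of the bulk case is correct, including the remark that the one-arm bound holds for closed dual arms via the matching-graph half of Proposition~\ref{prop:bxp}.

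One small slip in the half-plane case: with $\si=(1,0,0,1)$, discarding the \emph{open} arm $\ga_4$ leaves the color sequence $(1,0,0)$, which is not the event $\Arm_3^{hp}$ of \eqref{eq:5arm} — the paper defines that event specifically with $\si=(1,0,1)$, and it states no color-switching lemma that would let you substitute one three-arm sequence for another. Discard instead one of the two middle \emph{closed} arms ($\ga_2$ or $\ga_3$); the residual sequence is then exactly $(1,0,1)$ with $\ga_1$ still rightmost, and the discarded closed arm gives a half-plane one-arm event bounded by the (closed, dual) version of \eqref{eq:1arm}. Finally, your worry about ``inflating'' witness sets for Reimer is unnecessary: whether a dual path is $\om$-closed is determined by $\om$ restricted to the path's own vertices, and the arms are pairwise vertex-disjoint by definition, so the disjoint-witness condition is immediate.
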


Among the results of this section,
only the following corollary will be used explicitly in the rest of the paper.

%
\begin{cor}\label{cor:la}
There exist constants $c \geq1$ and $\lambda> 0$ so that, for all $n
\leq N$,
%
%
\begin{eqnarray}
\mathbb{P}_{p_c} \bigl(\mathcal{A}_6^*(n,N) \bigr) &\leq &
c(N/n)^{-(2+\lambda)}\quad \mbox{and}
\nonumber
\\[-8pt]
\\[-8pt]
\nonumber
\mathbb{P}_{p_c} \bigl(
\mathcal {A}_4^{hp *}(n,N) \bigr) &\leq &  c(N/n)^{-(2+\lambda)}.
\end{eqnarray}
\end{cor}
\begin{pf}
The statement above follows directly from Proposition~\ref
{prop:arms_defects} and Corollary~\ref{cor:la - 1}.
\end{pf}

%
\begin{figure}[b]

\includegraphics{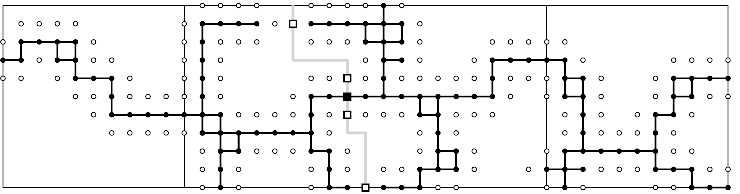}

\caption{The set $\chi$ in black, surrounded by $\partial\chi$.
The gray path $\gamma$ needs to use at least one passage point to cross
$\chi$ (black square).
Even if a site which is pivotal for $\{ \omega\in\mathcal{C}_h(S_n)
\}$ is enhanced,
$\gamma$~generally needs additional passage points to cross $\partial
\chi$
(see the empty squares).}
\label{fig:clusters}
\end{figure}

\section{Proof of Theorem~\texorpdfstring{\protect\ref{thm:cross}}{4}}\label{sec:cross}

\subsection{Plan of proof}\label{sec:sketch}

The proof of Theorem~\ref{thm:cross} is quite intricate;
we start with some notation and a brief description of the strategy.

Fix some $\delta> 0$ and $n \in\mathbb{N}$.
Consider a pair of configurations $\omega, \sigma$, and 
recall the definition of $\chi$, $\widetilde\omega$ and $\widetilde
\omega^\sigma$
from the lines above (\ref{def:omega_tilde}), (\ref{def:omega_tilde})
and (\ref{def:omega_tilde_delta}), respectively.
Call a point $x$ is called \emph{enhanced} if $\widetilde\omega(x) =
0$ but $\widetilde
\omega^\sigma(x) = 1$.
We will bound the probability $\mathbb{P}_{p_c,\delta} (\omega\in
\mathcal{C}_h(S_n) \mbox{
and } \widetilde\omega^\sigma\in\mathcal{C}_v(S_n))$,
which is obviously larger than $\mathbb{P}_{p_c,\delta} (\omega\in
\mathcal{C}_h(S_n) \mbox{
and } \widetilde\omega^\sigma\in\mathcal{C}_v(R_n))$.

If $\omega\in\mathcal{C}_h(S_n)$,
then $\chi$ contains a horizontal crossing of $S_n$.
If in addition there exists a $\widetilde\omega^\sigma$-open
vertical crossing of $S_n$,
then it must cross $\chi$, and hence it contains at least one enhanced
point; see Figure~\ref{fig:clusters}.

For $\omega\in\mathcal{C}_h(S_n)$ and $\sigma$ such that
$\widetilde\omega^\sigma\in\mathcal{C}_v(S_n)$, let $\gamma$ be
the left-most $\widetilde\omega^\sigma
$-open vertical crossing of $S_n$
containing the minimal number of enhanced points.
(We only take $\gamma$ to be left-most for it to be uniquely defined.)
Call the enhanced points of $\gamma$ \emph{passage points}, and let
$\mathscr{X}$
be the set of passage points.
If $\widetilde\omega^\sigma\notin\mathcal{C}_v(S_n)$ or $\omega
\notin\mathcal{C}_h(S_n)$, then let $\mathscr{X}
= \emptyset$.

Recall from the definition of $\widetilde\omega^\sigma$ that all
enhanced points are
contained in $R_n$.
Thus, under $\mathbb{P}_{p_c,\delta}$, $\mathscr{X}$ is a random set
of vertices of $R_n$,
nonempty when $\omega\in\mathcal{C}_h(S_n)$ and $\widetilde\omega
^\sigma\in\mathcal{C}_v(S_n)$.

We will prove (\ref{eq:cross}) by estimating the probability for
$\mathscr{X}$
to take specific values.
More precisely we will use the equality
%
%
\begin{equation}
\label{eq:probsum}
\mathbb{P}_{p_c,\delta} \bigl(\omega\in\mathcal{C}_h(S_n)
\mbox{ and } \widetilde \omega^\sigma\in\mathcal{C}_v(S_n)
\bigr) = \sum_{X \neq\emptyset} \mathbb{P}_{p_c,\delta}(
\mathscr{X}= X).
\end{equation}

The computation used to estimate $\mathbb{P}_{p_c,\delta}(\mathscr
{X}= X)$ is quite delicate.
Here are the main ideas; the actual proof is given in the following sections.

%
\begin{figure}[t]

\includegraphics{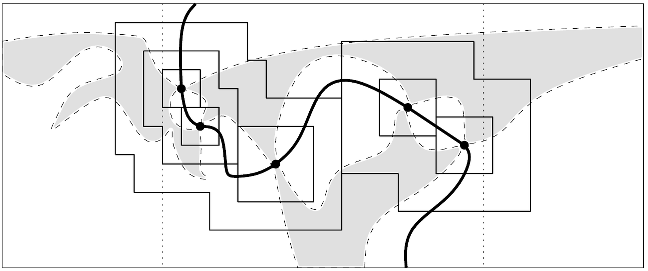}

\caption{The crossing $\gamma$ is drawn in bold and the passage
points are marked.
The set $\chi$ of sites open in $\omega$ but closed in $\widetilde
\omega$ is drawn
in gray. Its boundary is closed in $\omega$.
The blobs at the times of merger are outlined.
Observe the six-arm structure between the boundaries of the blobs.}
\label{fig:blobs}
\end{figure}

Fix a nonempty set of vertices $X$ with $|X| = k+1$, and let $\omega,
\sigma$ be configurations such that $\mathscr{X}= X$.
Since the passage points act as passages between the clusters of
$\widetilde\omega$,
they have, in $\omega$, a (local) six arm structure around them (see
Figure~\ref{fig:blobs}),
and we may control the probabilities of such configurations by $\pi_6$.

Imagine the following dynamics. Around each point $x \in X$ we grow a
ball at unit speed, $\Lambda_t(x) \dvtx t \geq 0$.
For a given time $t$, we call \emph{blobs} the connected components of
$\bigcup_x \Lambda_t(x)$.

For small times, the blobs are just balls centered at the points of $X$.
As time increases two blobs may \emph{merge} to create a bigger blob.
For a point $x\in X$ set $t(x)$ to be the first time of merger for the
blob containing $x$.
Thus $t(x) = \frac{1}2 \inf\{ \operatorname{dist}(x,y) \dvtx y \in X, y
\neq x\}$.
Then $\omega$ contains six arms from $x$ to $\partial\Lambda
_{t(x)}(x)$, an
event which has probability bounded by $\pi_6(t(x))$.
Moreover the regions $\Lambda_{t(x)}(x)$ for $x \in X$ are disjoint.
Finally, in order to be a passage point, $x$ has to be enhanced. This
happens with probability $\delta$, independently of $\omega$,
thus
%
%
\begin{equation}
\label{eq:bad_bound}
\mathbb{P}_{p_c,\delta}(\mathscr{X}= X) \leq\prod
_{x \in X} \pi_6 \bigl(t(x) \bigr) \delta.
\end{equation}

Unfortunately, this bound is not sufficient to obtain Theorem~\ref{thm:cross}.
If points are grouped in small bunches, then all values $t(x)$ are small,
and the right-hand side of (\ref{eq:bad_bound}) is not significantly
smaller than $\delta^{k+1}$.

In order to improve (\ref{eq:bad_bound}), we will also study the blobs
after their first mergers.
Consider a blob at the time of formation (e.g., by the merger
of two smaller blobs),
and the same blob at the first time it merges with another blob.
Let $B_1$ denote the blob at the initial time, and $B_2$ at the latter time.
Then we also observe six arms between $\partial B_1$ and $\partial B_2$.
This will add terms to the bound in (\ref{eq:bad_bound}), thus
improving it.

If we denote by $d_i$ the times of merger of blobs (counted with
multiplicity when more than two blobs merge at the same time),
then we obtain a bound on $\mathbb{P}_{p_c,\delta}(\mathscr{X}= X)$
as a function of
$d_1, \ldots, d_k$; see Proposition~\ref{prop:good_bound}.

In order to compute (\ref{eq:probsum}),
we also need to estimate the number of sets $X$ that yield a given set
of merger times $d_1,\ldots,d_k$.
This is done in Proposition~\ref{prop:X}.

In the above analysis we have omitted certain technical complications.
One is the influence of the boundary of $S_n$. As blobs expand,
they may touch the top and bottom of $S_n$, and special situations arise.
Another has to do with defects in arm events around passage points.

Before diving into the actual proof,
we mention that a simplified one-arm version of this argument already
appeared in \cite{Kes86IIC,BCKS}
for the study of the moments of the volume of the largest critical
percolation cluster in $\Lambda_n$.
It was shown there that the $k$th moment of this quantity is bounded
above by $k!(C n^2\pi_1(n))^k$ for a constant $C > 0$.
Contrary to the argument presented here,
in \cite{Kes86IIC,BCKS} blobs only need to be studied up to their first
time of merger,
and the resulting bound~(\ref{eq:bad_bound}) (with $\pi_6$ replaced by
$\pi_1$) suffices.
The fundamental reason for which~(\ref{eq:bad_bound}) suffices in that case
is that the one-arm exponent is smaller than $2$, hence $\sum_{k = 1}^n
k\pi_1(k) = O(n^2\pi_1(n))$.
The six-arm exponent, however, is larger than $2$, and the series $\sum_{k = 1}^{\infty} k\pi_6(k)$ converges,
thus requiring a more sophisticated analysis.

In \cite{K13}, the first author applies the refined counting arguments
presented here to the one-arm case
in order to derive an improved upper bound of $(Cn^2\pi_1(n/\sqrt{k}))^k$
for the $k$th moment of the volume of the largest critical cluster in
$\Lambda_{n}$.
These arguments lead to large deviation bounds for the volumes of large
critical percolation clusters.

\subsection{Two propositions}\label{sec:two_prop}

Fix some nonempty set $X \subset R_n$.
We associate to $X$ a~tree $\mathcal{T}= \mathcal{T}(X)$ as described below.
Although this is not important for our proof,
let us mention that $\mathcal{T}$ is a minimal spanning tree of $X$
and that the algorithm by which we construct it is Kruskal's
algorithm~\cite{K56}.

The vertices of $\mathcal{T}$ are the points of $X$, and the edges are added
successively as follows.

Let $\mathcal{T}_0$ be the graph with no edges and vertex-set $X$.
For $j \in\mathbb{N}$ define $\mathcal{T}_j$ by adding to $\mathcal
{T}_{j-1}$ a maximal set
of edges $(x,y)$ with $\operatorname{dist}(x,y) = j$,
which does not create cycles in $\mathcal{T}_{j}$.
Since $\operatorname{diam}(X) \leq4n$, $\mathcal{T}_j = \mathcal
{T}_{j+1}$ for $j\geq4n$, and we
define $\mathcal{T}= \mathcal{T}_{4n}$.
The graph $\mathcal{T}$ thus obtained
is indeed a tree: by construction it does not contain cycles and it is easy to check that it is connected.

Note that there is some ambiguity in the definition of $\mathcal{T}$
since there may be multiple choices
for the set of edges added to $\mathcal{T}_{j-1}$ to create $\mathcal{T}_j$.
To settle this, when multiple choices are available,
we choose the minimal one with respect to the lexicographical order of
$\mathbb{Z}^2 \times\mathbb{Z}^2$.
Let the root of $\mathcal{T}$ be the smallest element of $V(\mathcal
{T}) = X$ for the
lexicographical order of $\mathbb{Z}^2$.

Let $E(\mathcal{T})$ denote the edge-set of $\mathcal{T}$. Then $\#
E(\mathcal{T}) = k$.
For $e = (x,y) \in E(\mathcal{T})$, let $\mathrm{d}_{e} = \lfloor
\frac{1}2 \operatorname{dist}(x,y)
\rfloor+1$.
The multiset $\mathcal{D}(X) = [ \mathrm{d}_{e} \dvtx e \in\mathcal{T}]$
is called the set of
\emph{merger times} of $X$.

The terminology of merger times is inspired by the dynamics described
in Section~\ref{sec:sketch}.
Indeed, each edge $e$ of $\mathcal{T}$ corresponds to the merger of
two blobs
and $\mathrm{d}_{e}$ is their (approximate) time of merger.

%
\begin{prop}\label{prop:good_bound}
There exist constants $c, \lambda> 0$ such that,
for all $\delta>0$, $n \in\mathbb{N}$, and $X\subset R_n$ with $|X|
= k+1$, we have
%
%
\begin{equation}
\label{eq:good_bound}
\mathbb{P}_{p_c,\delta}(\mathscr{X}= X) \leq (c
\delta)^{k+1} n^{-(2 + \lambda)} \prod_{e \in
E(\mathcal{T})}
\mathrm{d}_{e}^{-(2 + \lambda)}.
\end{equation}
\end{prop}

Since the above offers a bound on $\mathbb{P}_{p_c,\delta}(\mathscr
{X}= X)$ as a
function of the set $\mathcal{D}(X)$,
it is natural to group the sum in (\ref{eq:probsum}) by the value of
$\mathcal{D}(X)$.

%
\begin{prop}\label{prop:X}
There exists a constant $K > 0$ such that,
for any given multiset of values $D = [d_1, \ldots, d_k]$,
the number of sets $X$ with $\mathcal{D}(X) = D$ is bounded as follows:
%
%
\begin{equation}
\label{eq:X} \# \bigl\{ X \subset R_n \dvtx \mathcal{D}(X)= D
\bigr\} \leq \mathcal{Q}(D) K^{k+1} n^2 \prod
_{i = 1}^{k} d_i,
\end{equation}
where $\mathcal{Q}(D)$ is the number of different ways of ordering $d_1,
\ldots,d_k$.
\end{prop}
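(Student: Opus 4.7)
The plan is to encode each admissible $X$ canonically as a triple consisting of an ordered rooted tree, a labeling of its edges by elements of $D$, and a geometric embedding, and then to bound the total number of such triples.

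Given $X$ with $\calD(X)=D$, let $v_0$ be the lex-smallest element of $X$, root $\calT(X)$ at $v_0$, and order the children of each internal vertex lexicographically. The resulting ordered (plane) rooted tree $\tau(X)$ on $k+1$ vertices carries a labeling $\lambda_X\colon E(\tau(X))\to D$ recording the merger time of each edge, whose image multiset is exactly $D$. Together with the embedding $\phi_X$ giving the actual vertex positions in $R_n$, the triple $(\tau(X),\lambda_X,\phi_X)$ determines $X$ uniquely, so it suffices to count such triples.

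The count factors into three standard pieces. First, the number of plane rooted trees on $k+1$ vertices equals the Catalan number $C_k\le 4^k$. Second, for a fixed $\tau$, the number of labelings of its $k$ (distinguishable) edges by the multiset $D$ is $\permut(D)=k!/\prod_j m_j!$. Third, for fixed $(\tau,\lambda)$ I bound the embeddings by processing $\tau$ in breadth-first order: the root has at most $|R_n|\le 5n^2$ positions, and each non-root vertex, whose parent is already placed, must lie on the $L^\infty$-sphere of radius $2\lambda(e)$ or $2\lambda(e)+1$ around its parent, giving at most $C(\lambda(e)\vee 1)$ positions (the $L^\infty$-sphere of radius $r$ in $\ZZ^2$ has $8r$ points). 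The product yields at most $5n^2\cdot C^k\prod_i(d_i\vee 1)$ embeddings. Multiplying the three factors and absorbing constants into a single $K$ produces the claimed bound $\permut(D)\,K^{k+1}n^2\prod_i d_i$.

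I anticipate no serious obstacle: each of the three counts (plane trees via Catalan, multinomial labelings, $L^\infty$-sphere sizes) is classical, and the injectivity of $X\mapsto(\tau(X),\lambda_X,\phi_X)$ is immediate since $\phi_X$ records positions explicitly. The only minor subtlety is the degenerate case $d_i=0$ (an edge of unit length), which would render $\prod_i d_i$ vacuously zero; this is handled by replacing $d_i$ by $d_i\vee 1$ in the sphere count and absorbing the discrepancy into $K$.
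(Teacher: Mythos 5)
Your argument is essentially the paper's own proof: encode $X$ via a rooted tree with edge labels from $D$ plus a sequential placement of vertices, bound the trees by the Catalan number $\le 4^{k}$, the labelings by $\permut(D)$, the root position by $O(n^2)$, and each subsequent vertex by $O(d_i)$ positions on the two relevant $L^\infty$-spheres around its already-placed parent. The only (harmless) differences are cosmetic — breadth-first rather than depth-first placement, and your explicit $d_i\vee 1$ treatment of the degenerate case $d_i=0$, which the paper glosses over.
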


Theorem~\ref{thm:cross} follows easily from the two propositions.

\begin{pf*}{Proof of Theorem~\ref{thm:cross} from Propositions~\ref{prop:good_bound} and~\ref{prop:X}}
Let $c, \lambda$ and $K$~be the constants provided by
Propositions~\ref
{prop:good_bound} and~\ref{prop:X}.
Choose $\delta> 0$ small enough to have $ c K \delta\sum_{d \geq1} d^{-(1
+ \lambda)} \leq1/2$.
It is essential here that $\lambda> 0$, so that the sum above converges.
Then, by Propositions~\ref{prop:good_bound} and~\ref{prop:X}, for $n
\in\mathbb{N}$,
\begin{eqnarray*}
&& \mathbb{P}_{p_c,\delta} \bigl(\omega\in\mathcal{C}_h(S_n)
\mbox { and } \widetilde \omega^\sigma\in\mathcal{C}_v(S_n)
\bigr)
\\
&& \qquad = \sum_{k \geq0} \sum
_{d_k \geq\cdots\geq d_1 \geq1} \mathop{\sum_{X \subseteq R_n}}_{\mathcal{D}(X) = [d_1, \ldots,d_k]}
\mathbb{P} _{p_c,\delta}(\mathscr{X}= X)
\\
&&\qquad \leq\sum_{k \geq0} (c K \delta)^{k+1}n^{-\lambda}
\sum_{d_k \geq
\cdots\geq d_1 \geq1} \mathcal{Q} \bigl([d_1,
\ldots,d_k] \bigr) \prod_{i=1}^k
d_i^{-(1 + \lambda)}
\\
&&\qquad  = c K \delta n^{-\lambda} \sum_{k \geq0} \biggl( c
K \delta\sum_{d \geq
1} d^{-(1 + \lambda)}
\biggr)^k
\\
&& \qquad \leq2 c K \delta n^{-\lambda}.
\end{eqnarray*}
%
\upqed\end{pf*}

\subsection{Proof of Proposition~\texorpdfstring{\protect\ref{prop:good_bound}}{14}}

The following lemma formalizes the fact that passage points have six
arms around them, possibly with a defect.

%
\begin{lemma}\label{lemma:arms}
Fix $n\in\mathbb{N}$:
\begin{enumerate}[(ii)]
\item[(i)]  Let $u \in S_n$ and $r \leq R$ such that $\Lambda_R(u) \subset S_n$.
If $\omega$ and $\sigma$ are configurations such that, when
$\widetilde{\omega}^\sigma$
is defined as in~(\ref{def:omega_tilde_delta}):
%
\begin{enumerate}[(a)] 
\item[(a)] $\Lambda_r(u)$ contains at least one
passage point,
\item[(b)]  $\Lambda_R(u) \setminus\Lambda_r(u)$
contains no
passage points.
\end{enumerate}
Then $\omega\in\mathcal{A}_6^*(u;r,R)$.
\item[(ii)]  Let $u \in\mathbb{Z}\times\{0,n\}$ and $r \leq R \leq n/2$. %
If $\omega,\sigma$ are configurations with the properties~\textup{(a)} and~\textup{(b)} above,
then $\omega\in\mathcal{A}_4^{hp *}(u;r,R)$.
\end{enumerate}
\end{lemma}

%
\begin{rem}\label{rem:notabuse}
In point (ii) above, when $u \in\mathbb{Z}\times\{n\}$,
we write $\mathcal{A}_4^{hp *}(u;r,R)$ for the event that there exist four
arms from
$\partial\Lambda_r(u)$ to $\partial\Lambda_R(u)$ in the half plane
below $\mathbb{R}
\times\{n\}$.
This is a slight abuse of notation that we will ask the reader to accept.
\end{rem}

\begin{pf*}{Proof of Lemma~\ref{lemma:arms}}
We start by giving a full proof of point (i);
we will then sketch the proof of (ii), marking the differences with
the previous point.

Let $u,r, R$ be as in (i).
For simplicity of notation we will write $A = \Lambda_r(u)$, $B =
\Lambda
_R(u)$ and $H= B\setminus A$.
Then $A \subset B\subset S_n$ and $A$ contains passage points, but $H$
does not.

Since $A$ contains passage points, $\gamma$ intersects $A$.
Thus we may find two disjoint sub-paths, $\gamma_1$ and $\gamma_2$,
of $\gamma$,
both contained in $H$, each connecting $\partial A$ to $\partial_iB$
and such that $\gamma$ contains at least one passage point between
$\gamma
_1$ and $\gamma_2$.
Let $\overline\gamma= \gamma_1 \cup\gamma_2$.
Then $\overline\gamma$ splits $H$ into two disjoint regions, $H^L$ and
$H^R$; see Figure~\ref{fig:six_arms}.

%
\begin{figure}

\includegraphics{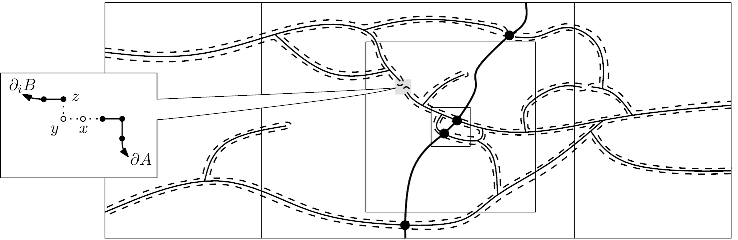}

\caption{Two concentric balls $A \subset B$ with two passage points in
$A$ but no passage points in $B \setminus A$.
Note the three arms on either side of $\gamma$.
The gray square marks the defect on one of the open arms.}
\label{fig:six_arms}
\end{figure}

Since $A$ contains passage points, there exists an $\omega$-open path
contained in $\chi$,
connecting $\partial A$ to the left-hand side of $S_n$.
This must contain a sub-path $\tau_1 \subset H$, connecting $\partial A$
to $\partial_iB$.
Since $\tau_1$ is $\omega$-open and $\widetilde\omega$-closed, it
can only
intersect $\gamma$ at passage points.
But, as part of $H$, $\tau_1$ does not contain passage points, thus is
fully contained in either $H^L$ or $H^R$.

Assume $\tau_1 \subset H^R$.
Then $\tau_1$ separates $H^R$ into two regions $H^{R+}$ and $H^{R-}$.
Let $\chi_R = \{ x \in H\dvtx  x  \stackrel{\omega, H}{\longleftrightarrow} \tau_1\}$ be
the $\omega
$-open cluster of $\tau_1$ in $H$.
The points of $\chi_R$ and those of $\partial\chi_R \cap H$ are closed
in $\widetilde\omega$ and are not passage points.
Thus they are not part of $\gamma$.
Hence $\partial\chi_R \cap H$ provides two paths $\tau_0$ and $\tau_2$
in $(\mathbb{Z}^2)^*$, contained in $H^{R-}$ and $H^{R+}$, respectively,
both closed in $\omega$ and connecting $\partial A$ to $\partial_iB$.

We have found up to now three arms $\tau_0, \tau_1, \tau_2$ in $H^R$,
with states closed, open and closed, respectively, in $\omega$.
It is natural to expect the same structure in $H^L$.
Some complications may arise though, hence the defect in the arm event.

Let $\chi_L = \{ x \in\chi\cap H^L \dvtx x \stackrel{\omega,H}{\longleftrightarrow}
\partial A \}$.
Also denote the ${}^{*}$-cluster of $\partial A$ in $H^L$ in the configuration
$\widetilde\omega$ by
$\Delta= \{x \in H^L \dvtx x \mathop{\stackrel{\widetilde\omega; H^L}{\longleftrightarrow}\!\!\!{}^*}   \partial A\}$.
Then $\chi_L \subset\Delta$.
First we claim that $\Delta$ must intersect $\partial_iB$.

Indeed, if it does not, consider the set $\partial(\Delta\cup A) \cap H^L$.
All sites of this set are $\widetilde\omega$-open.
Moreover, this set contains an $\widetilde\omega^\sigma$-open path
$\gamma'$ joining
$\gamma_1$ with $\gamma_2$.
Then $\gamma'$ contains no passage points, and this contradicts the
choice of $\gamma$
as having minimal number of passage points.

Let $\rho$ be a path in $\Delta$, connecting $\partial A$ to
$\partial_iB$.
Let $x$ be the last point of $\rho$ (when going from $\partial A$ toward
$\partial_iB$)
contained in $\chi_L \cup\partial\chi_L$.
If $x \in\partial_iB$, then there
exists a path $\tau_4$ connecting $\partial A$ to $\partial_iB$,
contained in
$\chi$ (hence $\omega$-open),
except possibly for its endpoint $x$.
Two additional $\omega$-closed arms $\tau_3$ and $\tau_5$ may be
found in
$H^L$ as previously done in $H^R$.

Suppose $x \notin\partial_iB$. Then $x\in\partial\chi_L$, and let
$y$ be the
next point visited by $\rho$.
Since $y \in\Delta$, there exists $z \sim y$ (or $z =y$) which is open
in $\omega$ but closed in $\widetilde\omega$.
In particular, $z$ is connected to the left side of $S_n$ by a $\omega
$-open path $\tau$ (part of $\chi$).
By choice of $x$, $z \notin\chi_L$, thus $\tau$ does not intersect $A$.
It does not intersect $\overline\gamma$ either, since the latter
contains no
passage points.
Thus $\tau$ contains a sub-path in $H^L$, linking $z$ to $\partial_iB$.

In conclusion there exists a path $\tau_4$ linking $\partial A$ to
$\partial_i
B$, contained in $H^L$
and open in $\omega$, with the possible exception of the sites $x$ and $y$.
With a possible modification of the configuration in $\Lambda_3(x)$,
two $\omega$-closed *-arms $\tau_3$ and $\tau_5$ may be found
by inspecting the boundary of the $\omega$-open cluster of $\tau_4$ in
$H^L$.

Since the arms $\tau_0, \tau_1, \tau_2$ are contained in $H^R$ and
$\tau_3, \tau_4, \tau_5$ are contained in $H^L$,
they are necessarily disjoint. This completes the proof of (i).

For (ii) consider $u \in[-3n,3n] \times\{0\}$ and $r \leq R \leq
n/2$ such that
$\Lambda_R(u) \setminus\Lambda_r(u)$ does not contain passage
points, but
$\Lambda_r(u)$ contains at least one.
In particular $\Lambda_R(u)$ intersects $R_n$, and since $R\leq n/2$,
$\Lambda_R(u) \cap\mathbb{H}\subset S_n$.

As before, write $A = \Lambda_r(u)$, $B = \Lambda_R(u)$ and $H=
(B\setminus A) \cap\mathbb{H}$.
In this case there exists a single sub-path $\overline\gamma$ of
$\gamma$
connecting $\partial A$ to $\partial_iB$.
Still $\overline\gamma$ splits $H$ into disjoint regions $H^L$ and $H^R$.

%
\begin{figure}[b]

\includegraphics{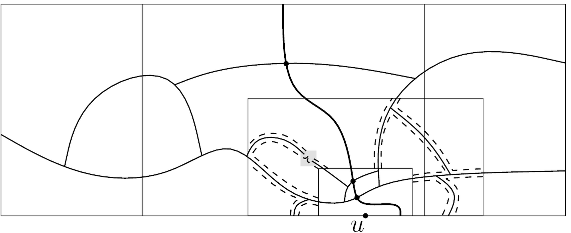}

\caption{The intersection of $\Lambda_r(u)$ with $\mathbb{H}$
contains passage points,
but $\Lambda_R(u) \setminus\Lambda_r(u)$ does not.
Then there are two open arms on either sides of $\gamma$ between
$\partial
\Lambda_r(u)$ and $\partial_i\Lambda_R(u)$.
Above each open arm (but not necessarily also below) there is a closed arm.}
\label{fig:six_hp_arms}
\end{figure}

We may proceed as before in defining $\tau_1$ and $\chi_R$.
The key difference with part~(i) is that only one part of the boundary
of $\chi_R$ is guaranteed to contain an $\omega$-closed arm.
Indeed, the part of the boundary above $\tau_1$ contains a path $\tau_2$,
contained in~$H^R$, closed in $\omega$, and connecting $\partial A$ to
$\partial_iB$.
The part below $\tau_1$, however, can intersect the bottom of $S_n$
very close to $\partial A$.
The same type of phenomenon takes place in~$H^L$.
In conclusion we obtain four arms in the half plane with one possible defect.
See also Figure~\ref{fig:six_hp_arms}.
\end{pf*}

We now turn to a consequence of Lemma~\ref{lemma:arms}
that will be used in the proof of Proposition~\ref{prop:X}.
To state it we need some additional notation.
Let $n\in\mathbb{N}$ and $A\subset B$ be two sets intersecting $R_n$.
Let $r = \lceil\operatorname{diam}(A)/2\rceil$.
Then there exists a vertex $u$ such that $A \subset\Lambda_{r}(u)$.
If several such vertices exist, let $u$ be the minimal one for the
lexicographical order of $\mathbb{Z}^2$.
If $\operatorname{dist}(u, \mathbb{R}\times\{0\}) \leq n/2$, let $v$
be the projection of
$u$ onto $\mathbb{R}\times\{0\}$.
Otherwise let $v$ be the projection of $u$ onto $\mathbb{R}\times\{n\}$.

We define the following additional quantities:
\begin{eqnarray*}
R &=& \sup \bigl\{s \in\mathbb{N}\dvtx \Lambda_s(u) \subset B \cap
S_n \bigr\} \vee r,
\\
r' &=& \inf \bigl\{s \in\mathbb{N} \dvtx \Lambda_R(u)
\subset \Lambda_s(v) \bigr\} \wedge n/2,
\\
R' &=& \bigl(\sup \bigl\{s \in\mathbb{N}\dvtx \Lambda_s(v)
\subset B \bigr\} \wedge n/2 \bigr)\vee r'.
\end{eqnarray*}
See Figure~\ref{fig:AB} for the meaning of $u,v,r,R,r'$ and $R'$.
Define the event
\[
\mathcal{E}(A,B) = \mathcal{A}_6^*(u;r,R) \cap\mathcal{A}_4^{hp *}
\bigl(v;r',R' \bigr).
\]
When $v \in\mathbb{R}\times\{n\}$, we use the notation $\mathcal{A}_4^{hp
*}(v;r',R')$ as described in Remark~\ref{rem:notabuse}.

%
\begin{figure}

\includegraphics{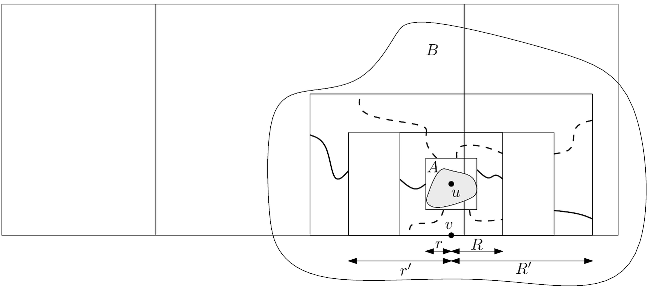}

\caption{Two sets $A \subset B$ intersecting $R_n$.
The six arms between $\partial\Lambda_r(u)$ and $\partial_i\Lambda_R(u)$
and the four arms in $\mathbb{H}$ between $\partial\Lambda_{r'}(u)$
and $\partial_i\Lambda_{R'}(u)$
ensure that $\mathcal{E}(A,B)$ occurs.}
\label{fig:AB}
\end{figure}

%
\begin{rem} \label{rem:notation_pi}
Henceforth we will write, for $n \leq N$, $\pi(n,N) = \pi(N/n) = c
(N/n)^{-(2+\lambda)}$,
where $c$ and $\lambda$ are given by Corollary~\ref{cor:la}.
This is to emphasize that the computations may be carried through with
different types of arm events with power-law behavior.
The quasi-multiplicativity property of probabilities of such events is
essential.
For $\pi$ it states that there exist constants $c_1,c_2 >0$ such that,
for all $n \leq m \leq N$,
%
%
\begin{eqnarray}
\label{eq:mult_pi} \pi(n,N) &\leq & \pi(n,m) \pi(m,N) \leq c_1
\pi(n,N),
\\
\label{eq:ext_pi} \pi(n,N) &\leq & c_2 \pi(n,2N).
\end{eqnarray}
\end{rem}


%
\begin{lemma}\label{lemma:Eprob}
\textup{(i)} Let $A \subset B$ be two sets of vertices of $\mathbb{Z}^2$.
If $\omega, \sigma$ are such that $A$ contains at least one passage point
and $B \setminus A$ contains none,
then $\omega\in\mathcal{E}(A,B)$.

\textup{(ii)} There exists a constant $c >0$ such that,
for all $n\in\mathbb{N}$ and all sets $A \subset B$ intersecting
$R_n$ with
$\operatorname{diam}(B) \leq6n$,
%
%
\begin{equation}
\label{eq:Eprob} \mathbb{P}_{p_c} \bigl(\mathcal{E}(A,B) \bigr) \leq c
\pi \bigl(\operatorname{diam}(A), \operatorname{diam}(A) + \operatorname{dist}
\bigl(A, B^c \bigr) \bigr),
\end{equation}
with $\pi(\cdot,\cdot)$ as in Remark~\ref{rem:notation_pi}.
\end{lemma}


\begin{pf}
(i) Let $A \subset B$ and $\omega, \sigma$ be as in the lemma.
With the notation in the definition of $\mathcal{E}(A,B)$, if $r < R$, then
$A \subset\Lambda_r(u) \subset\Lambda_R(u) \subset B \cap S_n$.
By Lemma~\ref{lemma:arms}(i), $\omega\in\mathcal{A}_6^*(u;r,R)$.
If $r = R$, then $\mathcal{A}_6^*(u;r,R)$ is trivial.

As in the previous paragraph, if $r' = R'$, then $\mathcal{A}
_4^{hp*}(v;r',R')$ is trivial.
Suppose that $r' < R'$. Without loss of generality we may assume $v
\in\mathbb{Z}\times\{0\}$.
Then
\[
A \cap\mathbb{H}\subset\Lambda_{r'}(v) \cap\mathbb{H}\subset
\Lambda_{R'}(v) \cap\mathbb{H}\subset B \cap S_n.
\]
By Lemma~\ref{lemma:arms}(ii), $\omega\in\mathcal{A}_4^{hp*}(v;r',R')$.
In conclusion $\omega\in\mathcal{E}(A,B)$.

(ii) If $R \geq n/4$, we have $\operatorname{diam}(B) \leq6n \leq24 R$.
Then the first inequality of Corollary~\ref{cor:la},
the fact that $\mathcal{A}_6^*(u;r,R) \subset\mathcal{E}(A,B)$
and~(\ref{eq:mult_pi}) yield (\ref{eq:Eprob}) after some simple
arithmetic manipulations.
Thus we may restrict ourselves to $R < n/4$.

We distinguish two cases.
First consider that $\Lambda_{R+1}(u)$ intersects $B^c$.
Then $R = \operatorname{dist}(u, B^c) - 1$,
and the first inequality of Corollary~\ref{cor:la} yields (\ref
{eq:Eprob}) as above.

Suppose now that $\Lambda_{R+1}(u)$ does not intersect $B^c$.
Then $\Lambda_R(u)$ necessarily intersects {$\mathbb{R}\times\{0,n\}$}.
It follows that $r' = 2R <n/2$ and $v \in B$.
By considering the cases $R'<n/2$ and $R'\geq n/2$ separately, we find
\[
R' \geq\frac{1}{12}\operatorname{dist} \bigl(v,
B^c \bigr) \geq \frac{1}{12} \bigl( \operatorname{dist} \bigl(A,
B^c \bigr) - r' \bigr).
\]
The second inequality of Corollary~\ref{cor:la}, equations (\ref
{eq:mult_pi}) and (\ref{eq:ext_pi})
and the above imply
\[
\mathbb{P}_{p_c} \bigl(\mathcal{A}_4^{hp *}
\bigl(v;r',R' \bigr) \bigr)\leq c' \pi
\bigl(R, \operatorname{dist} \bigl(A, B^c \bigr) \bigr),
\]
for some $c' > 0$.
In addition, by the first inequality of Corollary~\ref{cor:la}, we have
\[
\mathbb{P}_{p_c} \bigl(\mathcal{A}_6^*(u;r,R) \bigr) \leq
c'' \pi \bigl(\operatorname{diam}(A), R \bigr),
\]
for some $c'' > 0$.
Finally note that $\mathcal{A}_6^*(u;r,R)$ and $\mathcal{A}_4^{hp *}(v;r',R')$
depend on disjoint regions of the plane,
hence
\begin{eqnarray*}
\mathbb{P}_{p_c} \bigl(\mathcal{E}(A,B) \bigr) &\leq & c'
c'' \pi \bigl(\operatorname{diam}(A), R \bigr) \pi
\bigl(R, \operatorname{dist} \bigl(A, B^c \bigr) \bigr)
\\
&\leq & c \pi \bigl(\operatorname{diam}(A), \operatorname{diam}(A) +
\operatorname{dist} \bigl(A, B^c \bigr) \bigr),
\end{eqnarray*}
where $c > 0$ is obtained using again equations (\ref{eq:mult_pi}) and
(\ref{eq:ext_pi}).
\end{pf}
%
%
%

Finally we are ready for the proof of Proposition~\ref{prop:good_bound}.

\begin{pf*}{Proof of Proposition~\ref{prop:good_bound}}
Fix some nonempty set $X \subset R_n$ with $\#X = k+1$.
Let $e_1, \ldots, e_{k}$ be an ordering of the edges of $\mathcal{T}$
such that
the sequence $\mathrm{d}_{e_i}$ is increasing.

For an edge $e_i$ of $\mathcal{T}$, let $C_i$ be the set of vertices
of $\mathcal{T}$
connected to $e_i$ via edges~$e_j$ with $j \leq i$.
Let $\mathcal{C}= \{ C_i \dvtx i =1, \ldots, k\}$ and $\overline\mathcal
{C}: = \mathcal{C}\cup\{
\{x\} \dvtx x \in X\}$.
Inclusion provides a natural partial order of the elements of
$\overline\mathcal{C}
$. 
The singletons are the lowest elements; the maximal element is $X$.

For each $i=1,\ldots, k$, $C_i$ is the union of two smaller disjoint
elements of $\overline\mathcal{C}$, which we will call the offspring
of $C_i$.
If we write $e_i = (x,y)$, the offspring of $C_i$ are the connected
components of $x$ and $y$, respectively,
in the graph with vertices~$X$ and edges $\{e_1, \ldots, e_{i-1}\}$.

Thus the elements of $\overline\mathcal{C}$ form a binary tree with
the singletons
of $X$ as leaves.
We will sometimes refer to $\overline\mathcal{C}$ itself as a tree.
In the vision given in Section~\ref{sec:sketch}, $\overline\mathcal
{C}$ is the
coalescence tree of the blobs
(at least when blobs merge only two at a time).
Indeed, at time $\mathrm{d}_{e_i}$ two blobs merge and form a larger one,
that contains the vertices of $C_i$.
The two offspring of $C_i$ correspond to the two merging blobs.
If more than two blobs merge at the same time, we split this into
sequential pairwise mergers.

For $U \in\mathcal{C}$ let
\begin{eqnarray*}
\mathrm{d}_{U} &=& \biggl\lfloor\frac{1}{2} \max \bigl\{
\operatorname{dist}(x,y) \dvtx x,y \in U \mbox{ and } (x,y) \in E(\mathcal{T})
\bigr\} \biggr\rfloor,
\\
\Delta_U &=& \biggl\lfloor\frac{1}{2} \operatorname{diam}(U)
\biggr\rfloor,
\\
\Lambda_r(U) &=& \bigcup_{u \in U}
\Lambda_{r}(u),
\end{eqnarray*}
for $r\geq0$. For $U = \{x\}$ a singleton, set $\mathrm{d}_{U} =
\Delta_U = 0$
and $\Lambda_r(U) = \Lambda_r(x)$.

Consider $\omega$ and $\sigma$ such that $\mathscr{X}= X$.
For $U\in\mathcal{C}$, let $V = V(U)$ and $W = W(U)$ denote its offspring.
The two regions $\Lambda_{\mathrm{d}_{U}}(V) \setminus\Lambda
_{\mathrm{d}_{V}}(V)$ and
$\Lambda_{\mathrm{d}_{U}}(W) \setminus\Lambda_{\mathrm{d}_{W}}(W)$
are disjoint and do
not contain passage points.
On the other hand both $\Lambda_{\mathrm{d}_{V}}(V)$ and $\Lambda
_{\mathrm{d}_{W}}(W)$
contain passage points.
Thus, by Lemma~\ref{lemma:Eprob}(i), the event
\[
\mathcal{E}_U = \mathcal{E} \bigl( \Lambda_{\mathrm{d}_{V}}(V),
\Lambda_{\mathrm{d}_{U}}(V) \bigr) \cap\mathcal{E} \bigl( \Lambda
_{\mathrm{d}_{W}}(W), \Lambda_{\mathrm{d}_{U}}(W) \bigr),
\]
must occur in $\omega$.
By Lemma~\ref{lemma:Eprob}(ii) there exists some constant $c >0$ such that
%
%
\begin{equation}
\label{eq:Ebound} \mathbb{P}_{p} ( \mathcal{E}_U ) \leq c
\pi( \Delta_V + \mathrm{d}_{V}, \Delta_V +
\mathrm{d}_{U} )\pi( \Delta_W + \mathrm{d}_{W},
\Delta_W + \mathrm{d}_{U} ).
\end{equation}
To extend the definition of $\mathcal{E}_U$ to $U \in\overline
\mathcal{C}$, define it as
the full event (i.e., equal to $\Omega$) when $U$ is a singleton.

Since there are no passage points outside of $\Lambda_{\mathrm{d}_{X}}(X)$,
we also have $\omega\in\mathcal{E}_{\mathrm{out}} := \mathcal{E}( \Lambda
_{\mathrm{d}_{X}}(X), \Lambda
_{\mathrm{d}_{X}\vee n}(X))$.
Finally all passage points need to be enhanced, hence $\sigma(x) = 1$ for
all $x \in X$.
Thus
\[
\{\mathscr{X}= X\} \subset \biggl( \bigcap_{U \in\mathcal{C}} \{
\omega\in \mathcal{E}_U \} \biggr) \cap\{ \omega\in
\mathcal{E}_{\mathrm{out}}\} \cap \biggl( \bigcap_{x \in X}
\bigl\{ \sigma(x) = 1 \bigr\} \biggr).
\]
Note that the events $ \mathcal{E}_U \dvtx U \in\mathcal{C}$ and the
event $\mathcal{E}_{\mathrm{out}}$
are defined on disjoint parts of the plane. Hence, by (\ref{eq:Ebound}),
%
%
\begin{eqnarray}
\mathbb{P}_{p_c,\delta}(\mathscr{X}= X) &\leq & \biggl( \prod
_{U \in
\mathcal{C}} \mathbb{P}_{p_c,\delta}( \omega\in
\mathcal{E}_U) \biggr) \times \mathbb{P}_{p_c,\delta}( \omega\in
\mathcal{E}_{\mathrm{out}})\nonumber\\
&&{}\times \biggl( \prod_{x \in X}
\mathbb{P}_{p_c,\delta} \bigl( \sigma(x) = 1 \bigr) \biggr)
\nonumber
\\[-8pt]
\label{eq:good_bound2}\\[-8pt]
\nonumber
&\leq & (c \delta)^{k+1} \pi( \Delta_X +
\mathrm{d}_{X}, \Delta_X + \mathrm{d}_{X} +n
)
\\
&&{}\times\prod_{U \in\mathcal{C}} \pi( \Delta_V +
\mathrm{d}_{V}, \Delta_V + \mathrm{d}_{U} )
\pi( \Delta_W + \mathrm{d}_{W}, \Delta_W +
\mathrm{d}_{U}).
\nonumber
\end{eqnarray}

In estimating the product above, we will use an induction on the
binary tree $\overline\mathcal{C}$.
For $Y \in\mathcal{C}$ let
\[
\Phi(Y) = \prod_{U \in\mathcal{C}; U \subseteq Y} \pi( \Delta_V +
\mathrm{d}_{V}, \Delta_V + \mathrm{d}_{U} )
\pi( \Delta_W + \mathrm{d}_{W}, \Delta_W +
\mathrm{d}_{U}),
\]
and set $\Phi(Y) = 1$ when $Y$ is a singleton.

Let us prove by induction on the tree that there exists $c_0 > 0$ such
that, for all $Y \in\overline\mathcal{C}$,
%
%
\begin{equation}
\label{eq:rec} \Phi(Y) \leq\pi(\Delta_Y+ \mathrm{d}_{Y})
\prod_{U \in\mathcal{C}; U \subseteq Y} c_0 \pi(
\mathrm{d}_{U}).
\end{equation}
When $Y$ is a leaf of $\mathcal{C}$, that is, a singleton of $X$, then
$\Phi
(Y) = 1$,
and (\ref{eq:rec}) is trivially true for any $c_0 \geq1$.
Assume $Y$ is an element of $\mathcal{C}$ with offspring $Z_1,Z_2$.
We have
\[
\Delta_Y \leq\Delta_{Z_1} + \Delta_{Z_2} +
\mathrm{d}_{Y}.
\]
Thus for at least one $i \in\{1,2\}$,
$\Delta_Y + \mathrm{d}_{Y} \leq2 ( \Delta_{Z_i} + \mathrm{d}_{Y} )$.
Assume it is the case for $i = 1$.
Then
%
%
\begin{eqnarray}
\Phi(Y) & =& \Phi(Z_1)\Phi(Z_2) \pi(
\Delta_{Z_1} + \mathrm{d}_{Z_1}, \Delta_{Z_1} +
\mathrm{d}_{Y} )\pi( \Delta_{Z_2} + \mathrm{d}_{Z_2},
\Delta_{Z_2} + \mathrm{d}_{Y} )
\nonumber
\\
& \leq & \pi(\Delta_{Z_1} + \mathrm{d}_{Z_1}) \pi(
\Delta_{Z_1} + \mathrm{d}_{Z_1}, \Delta_{Z_1} +
\mathrm{d}_{Y} )
\nonumber
\\
&& {}\times\pi(\Delta_{Z_2} + \mathrm{d}_{Z_2}) \pi(
\Delta_{Z_2} + \mathrm{d}_{Z_2}, \Delta_{Z_2} +
\mathrm{d}_{Y} ) \prod_{U \in\mathcal{C}; U \subsetneq Y}
c_0 \pi(\mathrm{d}_{U})
\nonumber
\\
\label{eq:use_mult}
&\leq & c_1^2 \pi(\Delta_{Z_1} +
\mathrm{d}_{Y}) \pi(\Delta_{Z_2} + \mathrm{d}_{Y})
\prod_{U \in\mathcal{C}; U \subsetneq Y} c_0 \pi(
\mathrm{d}_{U})
\\
\label{eq:use_ineq}
&\leq & \frac{c_1^2 c_2}{c_0} \pi(\Delta_Y + \mathrm{d}_{Y})
\prod_{U \in\mathcal{C};
U \subseteq Y} c_0 \pi(
\mathrm{d}_{U}).
\end{eqnarray}
In (\ref{eq:use_mult}) we have used the quasi-multiplicativity property
of $\pi$ (\ref{eq:mult_pi}), hence the constant $c_1$.
In (\ref{eq:use_ineq}) we have used that $\pi(\Delta_{Z_1} + \mathrm{d}_{Y})
\leq c_2 \pi(2(\Delta_{Z_1} + \mathrm{d}_{Y} )) \leq c_2 \pi(\Delta
_Y + \mathrm{d}_{Y})$,
and $\pi(\Delta_{Z_2} + \mathrm{d}_{Y}) \leq\pi(\mathrm{d}_{Y})$.
The constant $c_2$
is given by (\ref{eq:ext_pi}).
In conclusion, the recurrence holds, provided that $c_0 \geq c_1^{2} c_2$.

Let us get back to bound (\ref{eq:good_bound2}). Using (\ref{eq:rec}),
we have
\begin{eqnarray*}
\mathbb{P}_{p_c,\delta}(\mathscr{X}= X) &\leq & (c \delta)^{k+1} \pi
( \Delta_X + \mathrm{d}_{X}, \Delta_X +
\mathrm{d}_{X} + n ) \pi(\Delta _X+ \mathrm{d}_{X})
\prod_{U \in\mathcal{C}} c_0 \pi(
\mathrm{d}_{U})
\nonumber
\\
&\leq & c_1 (c c_0 \delta)^{k+1} \pi(n) \prod
_{U \in\mathcal{C}} \pi(\mathrm{d}_{U}) \qquad
\mbox{by (\ref{eq:mult_pi})}.
\end{eqnarray*}
This proves Proposition~\ref{prop:good_bound}.
\end{pf*}

\subsection{Proof of Proposition~\texorpdfstring{\protect\ref{prop:X}}{15}}

We begin with a lemma.
The number of rooted trees with $n$ vertices is less than that of
rooted plane trees with $n$ vertices
(since these are rooted trees along with an ordering of the offspring
of each vertex).
Since the latter is well known to be the $n$th Catalan number (see, e.g., Theorem~3.2 of~\cite{Drmota2009}),
we find the following.
%

\begin{lemma}\label{lemma:catalan}
The number of rooted trees on $n$ vertices is less than
\[
c_n = \frac{1}{n+1} \pmatrix{2n
\cr
n} < 4^n,
\]
where $c_n$ is the $n$th Catalan number.
\end{lemma}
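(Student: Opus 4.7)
The plan is to follow the hint in the paragraph preceding the statement: compare rooted (abstract) trees with rooted plane trees, then invoke the classical Catalan enumeration of the latter and a crude binomial estimate.

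First, I would observe that every abstract rooted tree on $n$ vertices admits at least one plane embedding, obtained by choosing some linear ordering of the children of each vertex. Forgetting this ordering therefore yields a surjection from rooted plane trees on $n$ vertices onto (isomorphism classes of) rooted trees on $n$ vertices. In particular the count of the latter is bounded above by the count of the former.

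Second, I would invoke the classical enumeration cited in the statement \cite[Thm.~3.2]{Drmota2009}, according to which the number of rooted plane trees with $n$ vertices equals the $(n-1)$st Catalan number $\tfrac{1}{n}\binom{2n-2}{n-1}$. This is in turn bounded by $c_n = \tfrac{1}{n+1}\binom{2n}{n}$, the $n$th Catalan number, yielding the first inequality of the lemma.

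Finally, the bound $c_n < 4^n$ follows from the elementary estimate $\binom{2n}{n} \le \sum_{k=0}^{2n}\binom{2n}{k} = 2^{2n} = 4^n$, combined with $\tfrac{1}{n+1} \le 1$. The content of the lemma is entirely classical, so there is no genuine obstacle; the only minor subtlety is the off-by-one between the "vertex" and "edge" conventions for Catalan numbers, and this is harmlessly absorbed by passing to the slightly larger quantity $c_n$.
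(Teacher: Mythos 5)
Your proof is correct and follows essentially the same route as the paper: bound rooted trees by rooted plane trees via forgetting the planar order, cite the Catalan enumeration of the latter, and finish with $\binom{2n}{n}\le 4^n$. Your explicit handling of the vertex/edge off-by-one in the Catalan indexing is a small point the paper glosses over, but it changes nothing.
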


We turn to the proof of Proposition~\ref{prop:X}.

\begin{pf*}{Proof of Proposition~\ref{prop:X}}
Fix $n \in\mathbb{N}$, $k\geq0$, and let $D$ be a multiset of $k$ not
necessarily distinct natural numbers.

Consider a rooted tree $T$ with $k$ edges.
Let $v_0$ denote the root of $T$, and let $v_0,\ldots, v_k$
denote a fixed depth-first ordering of the vertices of $T$ when we
start at~$v_0$.
For $i \geq1$, let $e_i$ be the edge linking $v_i$ to $\{v_0,\ldots,
v_{i-1}\}$.
In addition, associate to each edge $e_i$ a number $d_i$ such that $[
d_1, \ldots, d_k ] = D$.
Thus $T$ is a rooted tree with decorated edges.

Let us bound the number of sets $X \subset R_n$ for which $\mathcal
{T}(X)$ is
isomorphic to $T$ in the sense of rooted trees with decorated edges.
[The decorations of $E(\mathcal{T}(X))$ are the merger times $\mathrm{d}_e$ defined
in the beginning of Section~\ref{sec:two_prop}.]
We will do this by placing the points of $X$ sequentially in $R_n$,
and counting at every stage the number of possibilities.

Since $X \subset R_n$, there are at most $4 n^2$ choices for the
position of $v_0$, which we denote by $x_0$.
Once $x_0$ is fixed, there are at most $8 \mathrm{d}_1$ choices for $x_1$,
the position of~$v_1$.
We continue in this fashion.
For every choice of $x_0, \ldots, x_{i-1}$, there are at most $8
\mathrm{d}
_{i}$ choices for $x_i$, the position of $v_i$.
In conclusion there are at most $4n^2 \prod_{i=1}^k 8 d_i$ sets of
points $X \subset R_n$ with $\mathcal{T}(X)$
isomorphic to $T$ in the sense of rooted decorated trees.

To compute the number of sets $X \subset R_n$ with $\mathcal{D}(X) = D$,
we need to consider all possible values of $T$ and all the different
ways of assigning the decorations $d_i$ to its edges.
By Lemma~\ref{lemma:catalan} there are at most $4^k$ choices for $T$.
The number of ways to assign the decorations is obviously bounded by
$\mathcal{Q}(D)$.
Proposition~\ref{prop:X} follows with~$K = 8\cdot4$.
\end{pf*}

\section*{Acknowledgments}

The first two authors thank IMPA for its hospitality during August
2013, when this project took place.
The project was initiated while the first author was working at Centrum
Wiskunde \& Informatica (CWI), Amsterdam.
He is grateful to CWI for the hospitality and to NWO for the support in
that period.
He thanks ENS for its hospitality during a week long visit and G\'
{a}bor Pete and Artem Sapozhnikov for
fruitful discussions.

All three authors are grateful to Rob van den Berg for introducing them
to the model and
for comments on the earlier versions of the paper.
They also thank Hugo Duminil-Copin for various comments and discussions.

%

%




%

\printaddresses
\end{document}